\theoremstyle{definition}
\theoremstyle{plain}
\newtheorem{theorem}{Theorem}
\newtheorem{lemma}{Lemma}
\newtheorem{proposition}{Proposition}
\newtheorem{corollary}{Corollary}
\DeclareMathOperator{\vrt}{vert}
\DeclareMathOperator{\conv}{conv}
\DeclareMathOperator{\pyr}{pyr}
\DeclareMathOperator{\aff}{aff}
\DeclareMathOperator{\op}{op}
\newcommand{\bone}{\textbf{1}}
\title{$f$-vector inequalities for order and chain polytopes}
\author{Ragnar Freij-Hollanti and Teemu Lundström}
\begin{document}
\maketitle

\begin{abstract}
    The order and chain polytopes are two 0/1-polytopes constructed from a finite poset.
    In this paper, we study the $f$-vectors of these polytopes.
    We investigate how the order and chain polytopes behave under disjoint unions and ordinal sums of posets, and how the $f$-vectors of these polytopes are expressed in terms of $f$-vectors of smaller polytopes.
    Our focus is on comparing the $f$-vectors of the order and chain polytope built from the same poset.
    In our main theorem we prove that for a family of posets built inductively by taking disjoint unions and ordinal sums of posets, for any poset $\mathcal{P}$ in this family the $f$-vector of the order polytope of $\mathcal{P}$ is component-wise at most the $f$-vector of the chain polytope of $\mathcal{P}$.
\end{abstract}

\section{Introduction}\label{section:intro}
The order polytope $\mathcal{O}(\mathcal{P})$ and chain polytope $\mathcal{C}(\mathcal{P})$ of a poset $\mathcal{P}$ were defined in 1986 by Stanley \cite{Stanley}, and are two of the most prominent examples of convex polytopes occurring naturally in algebraic combinatorics.
In particular, their toric rings are both examples of so-called algebras with straightening laws on distributive lattices \cite{Hibi87, Hibi&Li3}. 
The polytopes are well known to have the same Ehrhart polynomial, and in particular the same number of vertices and the same dimension, but in general not the same number of facets. 
Hibi and Li showed that indeed, the number of facets of the chain polytope is always at least the number of facets of the order polytope, and that equality holds if and only if $\mathcal{P}$ does not contain the so-called X-poset (Figure \ref{fig:Xposet}) as a subposet \cite{Hibi&Li}.
In the same paper, they conjectured that the $f$-vector of $\mathcal{C}(\mathcal{P})$ coordinate-wise dominates that of $\mathcal{O}(\mathcal{P})$ for any poset $\mathcal{P}$. 
The first step towards a proof came in \cite{edges}, when it was shown that the number $f_1$ of edges of the two polytopes agrees, for all posets. 
This proof used an entirely combinatorial interpretation of the edges of the two polytopes. 
While similar combinatorial descriptions for the higher-dimensional faces of $\mathcal{O}(\mathcal{P})$ occurred already in~\cite{Stanley}, such a description seems more evasive for the chain polytopes, yielding bijective proofs infeasible. 
Similar differences in the combinatorics occur in the recent generalizations that are the {\em marked} order and chain polytopes \cite{Ardila, Jochemko}. 
A generalization of Hibi's and Li's conjecture to marked order and chain polytopes was suggested by Fang and Fourier \cite{Fang}.

Recently, it was proved that the inequality conjectured by Hibi and Li holds for all so called maximal ranked posets \cite{Ahmad-Fourier-Joswig}.
In this paper, we prove the conjecture for a larger class of posets using geometric techniques, by considering a recursive structure on the order and chain polytopes. 
Our main result is that the inequality $f(\mathcal{C}(\mathcal{P}))\geq f(\mathcal{O}(\mathcal{P}))$ holds for the entire $f$-vector if $\mathcal{P}$ is recursively constructed from so-called $X$-free posets, using ordinal sums and disjoint unions. 
The rest of the paper is organized as follows. 
In Section~\ref{section:preliminaries}, we give the necessary definitions, which mostly follow the standards in the literature.
In Section~\ref{section:PvQ}, we go through the details of a special case of the subdirect sum, first defined by McMullen \cite{McMullen}.
We provide details to some of the proofs omitted in \cite{McMullen}.
In Section~\ref{section:O(A<B)_and_C(A<B)}, we analyze the geometry of the order and chain polytopes of ordinal sums, using this subdirect sum. 
In Section~\ref{section:f_polynomials}, we derive $f$-vector inequalities from the geometric considerations, which allows us to prove our main theorem in Section~\ref{section:main_theorem}.

\begin{figure}[t]
    \centering
    \includegraphics[scale=0.8]{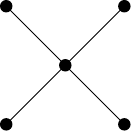}
    \caption{Hasse diagram for the 5-element poset ''$X$-poset''.}
    \label{fig:Xposet}
\end{figure}

\section{Preliminaries}\label{section:preliminaries}
We start by fixing notation and terminology.
If $E$ is any finite set then $\mathbb{R}^E$ denotes the real vector space of all sequences $(x_i)_{i \in E}$ with $x_i \in \mathbb{R}$ for all \mbox{$i \in E$}, together with point-wise addition and scalar multiplication.
In particular, $\mathbb{R}^{\emptyset}$ contains a single element, the empty sequence.
If $i \in E$ then $e_i$ is the vector of $\mathbb{R}^E$ with $1$ in the $i$-th coordinate and 0 elsewhere.
If $A \subseteq E$ we let $e_A = \sum_{i \in A}e_i$.
In general, if $S \subseteq \mathbb{R}$ then $S^E$ is the set of sequence $(x_i)_{i \in E}$ with $x_i \in S$ for all $i \in E$.
If $F$ and $E$ are disjoint finite sets and if $X \subseteq \mathbb{R}^E$ and $Y \subseteq \mathbb{R}^F$, then $X \times Y$ is the set of all sequences $(x_i)_{i \in E \cup F}$ where $(x_i)_{i \in E} \in X$ and $(x_i)_{i \in F} \in Y$.
The cardinality of a finite set $X$ is denoted by $\#X$.

All posets in this paper are assumed to be finite.
A subset of a poset is always identified with the corresponding induced subposet.
If $\mathcal{P}$ is a poset then $\mathcal{P}^{\op}$ is the poset on the same set where $x \le y$ in $\mathcal{P}^{\op}$ if $y \le x$ in $\mathcal{P}$.
Suppose $A$ and $B$ are posets on disjoint sets.
We let $A < B$ denote the \emph{ordinal sum} of $A$ and $B$, that is, $A < B$  is the poset on $A \cup B$ where $x \le y$ in  $A < B$ if $x \le y$ in $A$, or if $x \le y$ in $B$, or if $x \in A$ and $y \in B$.
The ordinal sum is sometimes denoted by $A \oplus B$, for example in \cite{Stanley_EC1}, but we use $A < B$ to emphasize that the operation is (in general) not commutative.
We also let $A \sqcup B$ denote the poset on $A \cup B$ where $x \le y$ in $A \sqcup B$ if $x \le y$  in $A$ or if $x \le y$ in $B$.
A \emph{filter} of a poset $\mathcal{P}$ is a subset $F \subseteq \mathcal{P}$ such that, whenever $x \in F$ and $y \ge x$ in $\mathcal{P}$ then $y \in F$.

All polytopes in this paper are convex.
For basic notions on convex polytopes we refer to \cite{Ziegler}.
The face lattice of a polytope $P$ is denoted by $L(P)$ and the vertex set of $P$ is denoted by $\vrt(P)$.
The empty set is considered to be a face of $P$ of dimension $-1$.
We also consider the entire polytope $P$ to be a face of $P$.
The join of two polytopes $P$ and $Q$ is denoted by $P*Q$ and the pyramid over $P$ is denoted by $\pyr(P)$.
If $v$ is a vertex of a polytope $P$ then the vertex figure of $P$ at $v$ is denoted by $P/v$.

Next, we quickly recall the definitions and basic properties of the order and the chain polytopes.
For more on these polytopes, see for example Stanley's paper \cite{Stanley}.

Let $\mathcal{P}$ be a finite poset.
The order polytope of $\mathcal{P}$ is the polytope $\mathcal{O}(\mathcal{P}) \subseteq \mathbb{R}^\mathcal{P}$ defined by the equations
\begin{align*}
    0 \le x_i \le 1, \quad &\text{for all } i \in \mathcal{P} \\
    x_i \le x_j, \quad &\text{for all } i \le j \text{ in } \mathcal{P}.
\end{align*}
The vertices of $\mathcal{O}(\mathcal{P})$ are the vectors $e_F$ where $F$ is a filter of $\mathcal{P}$ and the facets of $\mathcal{O}(\mathcal{P})$ are given by the equations 
\begin{align*}
    x_i = 0, \quad i  &\text{ minimal in }  \mathcal{P}\\
    x_j = 1, \quad j &\text{ maximal in } \mathcal{P} \\
    x_ i = x_j, \quad i &\text{ covered by } j \text{ in } \mathcal{P}.
\end{align*} 
In the literature, the order polytope is sometimes instead defined by requiring that $~{x_i \ge x_j}$ for all $i \le j$ in $\mathcal{P}$ \cite{Hibi&Li}.
The polytope defined this way is the polytope $\mathcal{O}(\mathcal{P}^{\op})$ in our definition, and the map
\begin{align*}
    \mathbb{R}^\mathcal{P} &\longrightarrow \mathbb{R}^\mathcal{P} \\
    x &\longmapsto \bone - x
\end{align*}
where $\bone \in \mathbb{R}^\mathcal{P}$ is the all 1's vector, gives an isomorphism $\mathcal{O}(\mathcal{P}) \to \mathcal{O}(\mathcal{P}^{\op})$.

The chain polytope of $\mathcal{P}$ is the polytope $\mathcal{C}(\mathcal{P}) \subseteq \mathbb{R}^\mathcal{P}$ defined by equations 
\begin{align*}
    x_i \ge 0, &\quad \text{for all } i \in \mathcal{P} \\
    \sum_{i \in C} x_i \le 1, &\quad \text{for all chains } C \text{ of }\mathcal{P}. 
\end{align*}
The vertices of $\mathcal{C}(\mathcal{P})$ are the vectors $e_A$ where $A$ is an antichain of $\mathcal{P}$, and the facets are given by the equations
\begin{align*}
    x_i &= 0, \quad i \in \mathcal{P}\\
    \sum_{i \in C} x_i &= 1, \quad C \text{ maximal chain in } \mathcal{P}.
\end{align*}
Both $\mathcal{O}(\mathcal{P})$ and $\mathcal{C}(\mathcal{P})$ are full dimensional polytopes, hence
\begin{equation*}
    \dim(\mathcal{O}(\mathcal{P})) = \dim (\mathcal{C}(\mathcal{P})) = \# \mathcal{P}.
\end{equation*}

The focus of this paper is on $f$-vectors.
However, instead of using $f$-vectors we will use $f$-polynomials, since this allows an easier manipulation of these $f$-vectors.
Here we define the $f$-polynomial of a polytope $P$ of dimension $d$ as the polynomial
\begin{equation*}
    f_P \coloneqq \sum_{i=-1}^d f_ix^{i+1} = \sum_{F \in L(P)} x^{\dim F + 1} \in \mathbb{N}[x]
\end{equation*}
where $f_i$ is the number of $i$-dimensional faces of $P$, for all $i=-1,0,1,\dots,d$.
Later, we will need to consider the number of faces containing the origin and not containing the origin.
For this, we introduce two additional polynomials.
If $P$ is a polytope containing the origin as a vertex, let
\begin{align*}
    f_P^0 &\coloneqq \sum_{\substack{F \in L(P) \\ 0 \in F}} x^{\dim(F) + 1} \\
    f_P^1 &\coloneqq \sum_{\substack{F \in L(P) \\ 0 \not\in F}} x^{\dim(F) + 1}.
\end{align*}
Note that $f_P = f_P^0 + f_P^1$.
Note also that since the $k$-dimensional faces of $P$ containing~$0$ are in bijection with the $(k-1)$-dimensional faces of the vertex figure $P/0$, we have $f_P^0 = x f_{P/0}$.

If $f, g \in \mathbb{N}[x]$ are any polynomials with non-negative integer coefficients, we write $f \le g$ to mean that the coefficient of $x^k$ in $f$ is at most the coefficient of $x^k$ in $g$, for all $k \ge 0$.
Hence, if $P$ and $Q$ are two polytopes with $f$-polynomials $f_P$ and $f_Q$, then $f_P \le f_Q$ if and only if the $f$-vector of $P$ is component-wise at most the $f$-vector of $Q$.
Note that in $\mathbb{N}[x]$, inequalities between polynomials are preserved under multiplication.
That is, if $f_1,f_2,g_1,g_2 \in \mathbb{N}[x]$ are such that $f_1 \le f_2$ and $g_1 \le g_2$, then $f_1g_1 \le f_2g_2$.

\section{Subdirect sum}\label{section:PvQ}
In this section we look at a special case of a polytope construction called the subdirect sum.
Our motivation for studying this construction comes from the following observation.

Let $A$ and $B$ be posets on disjoint sets.
Now in the ordinal sum $A < B$ we have
\begin{equation*}
    \{ \text{antichains of } A < B\} = \{ \text{antichains in } A \} \cup \{ \text{antichains in } B \}.
\end{equation*}
Therefore
\begin{equation*}
    \vrt(\mathcal{C}(A < B)) = \vrt(\mathcal{C}(A)) \times \{ 0 \}^B \cup \{ 0 \}^A \times \vrt(\mathcal{C}(B))
\end{equation*}
and hence
\begin{equation*}
    \mathcal{C}(A<B) = \conv(\mathcal{C}(A) \times \{ 0 \}^B \cup \{ 0 \}^A \times \mathcal{C}(B)).
\end{equation*}
Phrased geometrically, $\mathcal{C}(A<B)$ is constructed from $\mathcal{C}(A)$ and $\mathcal{C}(B)$ by gluing them together at the origin so that their affine hulls meet only in a single point, namely, at the origin.

In general, let $P \subseteq \mathbb{R}^m$ and $Q \subseteq \mathbb{R}^n$ be polytopes with $0 \in \vrt(P)$ and \mbox{$0 \in \vrt(Q)$}.
We define
\begin{equation*}
    P \vee Q \coloneqq \conv(P \times \{ 0 \}^n \cup \{ 0 \}^m \times Q) \subseteq \mathbb{R}^{m+n}.
\end{equation*}
This construction is a special case of the subdirect sum.
For more on the subdirect sum, see for example McMullen's paper \cite{McMullen}.
Using the notation of that paper, our construction here could be written as
\begin{equation*}
    P \vee Q = (P,\{ 0 \}) \oplus(Q,\{ 0 \}),
\end{equation*}
although we won't use that notation here.
In his paper \cite{McMullen}, McMullen gives a description of the faces of $P \vee Q$ in terms of the faces of $P$ and $Q$ (\cite{McMullen}, Proposition~2.3) although without a detailed proof.
We will fill in those details now.
For the rest of this section, we let $P$ and $Q$ be fixed polytopes as above.

First, we notice that the dimension behaves nicely.
\begin{proposition}
    $\dim(P \vee Q) = \dim(P) + \dim(Q)$.
\end{proposition}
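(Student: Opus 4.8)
The plan is to prove the dimension formula by establishing the matching upper and lower bounds separately, exploiting the fact that $P$ and $Q$ share only the origin.

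For the upper bound, I would observe that every point of $P \vee Q$ lies in the affine span of the two embedded copies $P \times \{0\}^n$ and $\{0\}^m \times Q$. More precisely, $P \times \{0\}^n$ lives inside the coordinate subspace $\mathbb{R}^m \times \{0\}^n$ and $\{0\}^m \times Q$ lives inside $\{0\}^m \times \mathbb{R}^n$. Since both copies contain the origin, their affine hulls are in fact linear subspaces, and $\aff(P \times \{0\}^n)$ has dimension $\dim(P)$ while $\aff(\{0\}^m \times Q)$ has dimension $\dim(Q)$. The affine hull of the whole convex hull is the sum of these two subspaces, and because they sit in complementary coordinate blocks they intersect only at the origin; hence the dimension of the sum is $\dim(P) + \dim(Q)$, giving $\dim(P \vee Q) \le \dim(P) + \dim(Q)$.

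For the lower bound, I would produce enough affinely independent points inside $P \vee Q$. Since $0 \in \vrt(P)$, the polytope $P$ contains $\dim(P)$ further vertices $v_1, \dots, v_{\dim(P)}$ such that $0, v_1, \dots, v_{\dim(P)}$ are affinely independent in $\mathbb{R}^m$; embedding these as $(v_k, 0) \in \mathbb{R}^{m+n}$ keeps them affinely independent. Likewise pick $0, w_1, \dots, w_{\dim(Q)}$ affinely independent in $Q$ and embed them as $(0, w_\ell)$. I would then argue that the full collection
\begin{equation*}
    0,\ (v_1,0),\ \dots,\ (v_{\dim(P)},0),\ (0,w_1),\ \dots,\ (0,w_{\dim(Q)})
\end{equation*}
is affinely independent, which again follows from the complementary block structure: any affine dependence splits into a dependence among the $(v_k,0)$ together with $0$ in the first block and a dependence among the $(0,w_\ell)$ together with $0$ in the second block. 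These $1 + \dim(P) + \dim(Q)$ points all lie in $P \vee Q$, so $\dim(P \vee Q) \ge \dim(P) + \dim(Q)$.

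The two bounds together give the claim. I do not expect a serious obstacle here; the only point requiring a little care is the affine-independence bookkeeping, and the key idea making it routine is that the shared vertex is the origin, so affine hulls become linear subspaces and the coordinate splitting $\mathbb{R}^{m+n} = \mathbb{R}^m \times \mathbb{R}^n$ forces the two pieces to meet only at $0$.
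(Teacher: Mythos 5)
Your proof is correct and follows essentially the same route as the paper: both arguments reduce to the observation that $\aff(P\times\{0\}^n)$ and $\aff(\{0\}^m\times Q)$ are linear subspaces (since each copy contains the origin) meeting only at $0$, so the dimensions add. The paper invokes the dimension formula for such a union in one step, whereas you unpack it into matching upper and lower bounds with an explicit affinely independent set; this is just a more detailed rendering of the same argument.
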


\begin{proof}
    We have
    \begin{align*}
        \aff(P \vee Q) &= \aff(\conv(P \times \{ 0 \}^n \cup \{ 0 \}^m \times Q)) \\
        &= \aff(P \times \{ 0 \}^n \cup \{ 0 \}^m \times Q).
    \end{align*}
    Since both $P$ and $Q$ contain the origin, we have
    \begin{equation*}
        \{ 0 \} = \aff(P \times \{ 0 \}^n) \cap \aff(\{ 0 \}^m \times Q).
    \end{equation*}
    Thus
    \begin{align*}
        \dim(P \vee Q) &= \dim(P \times \{ 0 \}^n \cup \{ 0 \}^m \times Q) \\
        &= \dim(P \times \{ 0 \}^n) + \dim(\{ 0 \}^m \times Q) \\
        &= \dim(P) + \dim(Q).
    \end{align*}
\end{proof}

Next, we look at the vertices of $P \vee Q$.
From the definition we get
\begin{equation*}
    P \vee Q = \conv(\vrt(P) \times \{ 0 \}^n \cup \{ 0 \}^m \times \vrt(Q)).
\end{equation*}
Therefore $\vrt(P \vee Q) \subseteq \vrt(P) \times \{ 0 \}^n \cup \{ 0 \}^m \times \vrt(Q)$.
For the rest of this section, write $\vrt(P) \setminus \{ 0 \} = \{ v_1, \dots, v_k \}$ and $\vrt(Q) \setminus \{ 0 \} = \{ w_1,\dots,w_{\ell} \}$.

\begin{proposition}\label{prop:vertices_of_PvQ}
    The vertices of $P \vee Q$ are given by
    \begin{equation*}
        \vrt(P \vee Q) = \{ (v_1,0),\dots,(v_k,0), (0,w_1),\dots,(0,w_\ell), (0,0) \} \subseteq \mathbb{R}^{m+n}.
    \end{equation*}
\end{proposition}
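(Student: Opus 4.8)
The plan is to use the inclusion $\vrt(P \vee Q) \subseteq \vrt(P) \times \{ 0 \}^n \cup \{ 0 \}^m \times \vrt(Q)$ already established above, so that it only remains to prove the reverse inclusion: that each of the listed points is genuinely a vertex. The tool I would use throughout is the elementary criterion that if $S$ is a finite set and $p \in S$ admits a linear functional $c$ on the ambient space with $c(p) > c(s)$ for every $s \in S \setminus \{ p \}$, then $p$ is a vertex of $\conv(S)$; indeed, writing an arbitrary point of $\conv(S)$ as a convex combination $\sum_s \lambda_s s$ shows that $p$ is then the unique maximizer of $c$ over $\conv(S)$, hence a $0$-dimensional face. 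Here I would take $S = \vrt(P) \times \{ 0 \}^n \cup \{ 0 \}^m \times \vrt(Q)$, the finite generating set of $P \vee Q$, and for each candidate vertex construct an appropriate $c$ on $\mathbb{R}^{m+n}$, verifying the strict inequality only on the finitely many points of $S$.

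For the points of the form $(v_i, 0)$ I would start from a linear functional $\phi$ on $\mathbb{R}^m$ that is uniquely maximized over $P$ at the vertex $v_i$ (such a $\phi$ exists precisely because $v_i \in \vrt(P)$), and define $c(x,y) = \phi(x)$ on $\mathbb{R}^{m+n}$, that is, I pad $\phi$ by zeros in the $Q$-coordinates. On $\vrt(P) \times \{ 0 \}^n$ the functional $c$ agrees with $\phi$ and is thus uniquely maximized at $(v_i,0)$, while on $\{ 0 \}^m \times \vrt(Q)$ it is constantly equal to $\phi(0)$, which is strictly smaller than $\phi(v_i)$ since $0 \ne v_i$ is not the maximizer. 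Hence $(v_i,0)$ uniquely maximizes $c$ over all of $S$ and is a vertex. The points $(0, w_j)$ are handled symmetrically, padding a supporting functional of $Q$ at $w_j$ by zeros in the $P$-coordinates.

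The only case that needs input from both factors simultaneously, and hence the one I would regard as the crux, is the shared point $(0,0)$. Here I would choose a functional $\phi$ on $\mathbb{R}^m$ uniquely maximized over $P$ at $0$ and a functional $\psi$ on $\mathbb{R}^n$ uniquely maximized over $Q$ at $0$, and normalize them by subtracting constants so that $\phi(0) = \psi(0) = 0$; then $\phi$ is strictly negative on $\vrt(P) \setminus \{ 0 \}$ and $\psi$ is strictly negative on $\vrt(Q) \setminus \{ 0 \}$. Setting $c(x,y) = \phi(x) + \psi(y)$, on $\vrt(P) \times \{ 0 \}^n$ we get $c = \phi \le 0$ with equality only at the origin, and likewise $c = \psi \le 0$ on $\{ 0 \}^m \times \vrt(Q)$ with equality only at the origin, so $(0,0)$ is the unique maximizer of $c$ over $S$ and is therefore a vertex. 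Combining the three cases with the established inclusion yields the claimed equality. No individual step is hard; the mild subtlety is only that at the origin one must combine supporting functionals coming from both $P$ and $Q$, rather than padding a single one by zeros as in the other two cases.
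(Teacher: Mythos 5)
Your proposal is correct, and it takes a somewhat different route from the paper. Both proofs reduce to the same task---showing that each point of the generating set $S = \vrt(P)\times\{0\}^n \cup \{0\}^m\times\vrt(Q)$ is genuinely a vertex---but you do this uniformly by exhibiting, for each candidate point, a linear functional uniquely maximized there over $S$, whereas the paper argues by contradiction that no point of $S$ lies in the convex hull of the others. Concretely, for $(v_1,0)$ the paper takes a hypothetical convex combination and projects it to the $P$-coordinates to contradict $v_1\in\vrt(P)$; for $(0,0)$ it combines a supporting functional of $P$ at $0$ (to force the $P$-coefficients to vanish) with the vertex property of $0$ in $Q$. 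Your treatment of $(0,0)$ via $c(x,y)=\phi(x)+\psi(y)$ with $\phi,\psi$ supporting $P$ and $Q$ at the origin is essentially the dual of that step, and in fact the same device the paper later uses in Lemma~\ref{lemma:if_0_in_F_and_0_in_G_then_FvG_in_L(PvQ)}. Your approach buys uniformity (one criterion, three instantiations) and produces explicit supporting hyperplanes, which foreshadows the face analysis that follows; the paper's version avoids having to state the ``unique maximizer'' criterion and stays entirely at the level of convex combinations except where a functional is unavoidable. One cosmetic remark: since your $\phi$ and $\psi$ are linear, $\phi(0)=\psi(0)=0$ holds automatically and no normalization by subtracting constants is needed (and if you intended affine functionals, the criterion still applies verbatim), so this is not a gap.
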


\begin{proof}
    We saw the inclusion $\subseteq$ above.
    To prove the inclusion $\supseteq$ we need to show that no point in the set on the right-hand side is in the convex hull of the rest.
    We start with $(0,0)$.
    Suppose we had a convex combination
    \begin{equation*}
        (0,0) = \sum_{i=1}^k t_i(v_i,0) + \sum_{j = 1}^\ell s_j (0,w_j). \tag{*}
    \end{equation*}
    From this we obtain a linear combination $0 = t_1v_1 + \dots + t_kv_k$.
    Since $0$ is a vertex of $P$ we find some $\alpha \in (\mathbb{R}^m)^*$ such that $\alpha(v_i) < 0$ for all $i \in \{ 1,\dots,k \}$.
    Hence we have
    \begin{equation*}
        0 = \alpha(0) = t_1 \alpha(v_1) + \dots + t_k \alpha(v_k).
    \end{equation*}
    Here $t_i\alpha(v_i) \le 0$ for all $i$.
    Therefore $t_i \alpha(v_i) = 0$ for all $i$.
    Since $\alpha(v_i) < 0$, we need to have $t_i = 0$ for all $i = 1,\dots,k$.
    Now from (*) we see that we have a convex combination $0 = s_1w_1 + \dots + s_\ell w_\ell$, which is a contradiction as $0$ is a vertex of $Q$.
    Hence $(0,0)$ is not in the convex hull of the other points.
    If we had a convex combination, say,
    \begin{equation*}
        (v_1,0) = \sum_{i=2}^k t_i(v_i,0) + \sum_{j=1}^\ell s_j(0,w_j) + r(0,0)
    \end{equation*}
    then we would have a convex combination
    \begin{equation*}
        v_1 = \sum_{i=2}^k t_iv_i + (1-t_2 -\dots -t_k)0
    \end{equation*}
    and thus $v_1 \in \conv(v_2,\dots,v_k,0)$, a contradiction since $v_1$ is a vertex of $P$.
    Similarly for all the other points.
    We are done.
\end{proof}

Before we start looking at the faces of $P\vee Q$, we recall the following basic result:
The faces of a polytope are exactly those convex hulls of vertices that maximize some linear function.
More specifically, let $R \subseteq \mathbb{R}^d$ be any polytope.
Let $W \subseteq \vrt(R)$.
Then $\conv(W)$ is a face of $R$ if and only if there exists $\alpha \in (\mathbb{R}^d)^*$ and $c \in \mathbb{R}$ such that for all $v \in \vrt(R)$,
\begin{enumerate}[(i)]
    \item $\alpha(v) \le c$, and
    \item $\alpha(v) = c$ if and only if $v \in W$.
\end{enumerate}
If $W \subseteq \vrt(R)$ and there exists $\alpha \in (\mathbb{R}^d)^*$ and $c \in \mathbb{R}$ with the properties $(i)$ and $(ii)$ above, we say $W$ \emph{maximizes the equation} $\alpha(x) \le c$ \emph{for} $R$, which implies that $\conv(W)$ is a face of $R$.

\begin{lemma}\label{lemma:if_0_in_H_then_H=FvG}
    Let $H \in L(P \vee Q)$ with $0 \in \vrt(H)$.
    Then $H = F \vee G$ for some $F \in L(P)$ and $G \in L(Q)$ where $0 \in F$ and $0 \in G$.
\end{lemma}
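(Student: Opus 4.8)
The plan is to recover $F$ and $G$ from a single supporting functional of $H$, by splitting the coordinates into the $P$-block and the $Q$-block. Since $H$ is a face of $P \vee Q$, the basic result recalled just before the lemma provides $\gamma \in (\mathbb{R}^{m+n})^*$ and $c \in \mathbb{R}$ such that $\gamma(v) \le c$ for all $v \in \vrt(P \vee Q)$, with equality exactly on $\vrt(H)$. I would write $\gamma(x,y) = \alpha(x) + \beta(y)$ for $\alpha \in (\mathbb{R}^m)^*$ and $\beta \in (\mathbb{R}^n)^*$. The key first observation is that $c = 0$: by hypothesis $(0,0) \in \vrt(H)$, so $c = \gamma(0,0) = 0$. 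This vanishing of the constant is what lets the two blocks decouple.

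Next I would feed the vertex description of Proposition~\ref{prop:vertices_of_PvQ} into these inequalities. Evaluating $\gamma$ on the vertices $(v_i,0)$ and $(0,w_j)$ gives $\alpha(v_i) \le 0$ for every $v_i \in \vrt(P)\setminus\{0\}$ and $\beta(w_j) \le 0$ for every $w_j \in \vrt(Q)\setminus\{0\}$; combined with $\alpha(0)=\beta(0)=0$, this says exactly that $\alpha(x) \le 0$ is a valid inequality for $P$ attaining its maximum value $0$ (at least at the vertex $0$), and likewise $\beta(y)\le 0$ for $Q$. Hence
\[
F \coloneqq \conv\{v \in \vrt(P) : \alpha(v) = 0\}, \qquad G \coloneqq \conv\{w \in \vrt(Q) : \beta(w) = 0\}
\]
are faces of $P$ and $Q$ respectively, each maximizing its inequality, and both contain $0$ since $\alpha(0) = \beta(0) = 0$.

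It then remains to verify $H = F \vee G$, which I would do by comparing vertex sets and applying Proposition~\ref{prop:vertices_of_PvQ} a second time. A vertex $(v_i,0)$ lies in $\vrt(H)$ iff $\gamma(v_i,0)=0$, i.e.\ iff $\alpha(v_i)=0$, i.e.\ iff $v_i \in \vrt(F)$; symmetrically for the $(0,w_j)$; and $(0,0)$ lies in both. Using the standard fact that the vertices of the face $F$ are precisely the vertices of $P$ on the supporting hyperplane $\{\alpha = 0\}$, Proposition~\ref{prop:vertices_of_PvQ} identifies $\vrt(F \vee G)$ with $\vrt(H)$. Since each of $H$ and $F \vee G$ is the convex hull of its vertices, we conclude $H = F \vee G$ with $0 \in F$ and $0 \in G$, as claimed.

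I do not expect a serious obstacle here; the only point requiring care is the bookkeeping that decoupling $\gamma$ into $(\alpha,\beta)$ with $c=0$ simultaneously yields supporting inequalities for $P$ and for $Q$ whose maximizing vertices match those of $H$. The substantive idea is simply that $(0,0)\in H$ forces $c=0$, and it is exactly this that makes the face split as a subdirect sum; without the origin being a vertex of $H$ there would be no reason for such a decomposition to exist.
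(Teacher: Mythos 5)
Your proof is correct and follows essentially the same route as the paper's: both split the supporting functional $\gamma$ of $H$ into $\alpha(x)+\beta(y)$, use $(0,0)\in\vrt(H)$ to force the constant $c=0$, and match the vertex sets of $H$ and $F\vee G$ via Proposition~\ref{prop:vertices_of_PvQ}. The only difference is cosmetic: the paper defines $F$ and $G$ from $\vrt(H)$ first and then certifies they are faces, whereas you define them as the faces cut out by $\alpha$ and $\beta$ and then verify $H=F\vee G$.
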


\begin{proof}
    Write
    \begin{equation*}
        \vrt(H) = \{ (v_1,0),\dots,(v_s,0),(0,w_1),\dots,(0,w_r), (0,0) \}
    \end{equation*}
    where $v_i \in \vrt(P) \setminus \{ 0 \}$ and $w_i \in \vrt(Q) \setminus \{ 0 \}$.
    Let $F = \conv(v_1, \dots, v_s,0)$ and $G =  \conv(w_1, \dots, w_r,0)$.
    To prove the first claim the simply compute
    \begin{align*}
        F \vee G &= \conv(F \times \{ 0 \}^n \cup \{ 0 \}^m \times G) \\
        &= \conv(\conv(v_1, \dots, v_s,0) \times \{ 0 \}^n \cup \{ 0 \}^m \times \conv(w_1, \dots, w_r,0) ) \\
        &= \conv(\conv(v_1,0),\dots,(v_s,0),(0,0)) \cup \conv((0,w_1),\dots,(0,w_r),(0,0))) \\
        &= \conv((v_1,0),\dots,(v_s,0),(0,w_1),\dots,(0,w_r),(0,0)) \\
        &= H.
    \end{align*}
    Since $H$ is a face of $P \vee Q$ containing the origin, we find some $\gamma \in (\mathbb{R}^{m+n})^*$ such that $\vrt(H)$ maximizes the equation $\gamma(x,y) \le 0$ for $P \vee Q$.
    We can write $\gamma(x,y) = \alpha(x) + \beta(y)$ for some $\alpha \in (\mathbb{R}^{m})^*$ and $\beta \in (\mathbb{R}^n)^*$.
    One easily shows that $\vrt(F)$ maximizes the equation $\alpha(x) \le 0$ for $P$ and that $\vrt(G)$ maximizes the equation $\beta(y) \le 0$ for $Q$.
    Thus $F \in L(P)$ and $G \in L(Q)$.
\end{proof}

\begin{lemma}\label{lemma:if_0_in_F_and_0_in_G_then_FvG_in_L(PvQ)}
    If $F \in L(P)$ and $G \in L(Q)$ with $0 \in F$ and $0 \in G$, then
    \begin{equation*}
        F \vee G \in L(P \vee Q)
    \end{equation*}
    where $0 \in F \vee G$.
\end{lemma}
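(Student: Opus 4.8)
The plan is to prove the converse of Lemma~\ref{lemma:if_0_in_H_then_H=FvG} using the same supporting-hyperplane criterion, by combining the linear functionals that cut out $F$ and $G$ into a single functional that cuts out $F \vee G$. First I would exploit that $F$ is a face of $P$ containing the origin: by the maximization criterion there exist $\alpha \in (\mathbb{R}^m)^*$ and $c \in \mathbb{R}$ with $\alpha(v) \le c$ for all $v \in \vrt(P)$, and $\alpha(v) = c$ exactly when $v \in \vrt(F)$. Since $0 \in F$ and $0$ is a vertex of $P$, the origin is a vertex of $F$, and from $\alpha(0) = 0 = c$ we conclude $c = 0$; hence $\vrt(F)$ maximizes $\alpha(x) \le 0$ for $P$. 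In the same way I obtain $\beta \in (\mathbb{R}^n)^*$ such that $\vrt(G)$ maximizes $\beta(y) \le 0$ for $Q$. Normalizing the constant to be $0$ on both sides is the key step that makes the functionals combine cleanly.

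Next I would set $\gamma(x,y) = \alpha(x) + \beta(y) \in (\mathbb{R}^{m+n})^*$ and evaluate it on the vertices of $P \vee Q$ listed in Proposition~\ref{prop:vertices_of_PvQ}. On a vertex $(v_i,0)$ we have $\gamma(v_i,0) = \alpha(v_i) \le 0$, with equality if and only if $v_i \in \vrt(F)$; on $(0,w_j)$ we have $\gamma(0,w_j) = \beta(w_j) \le 0$, with equality if and only if $w_j \in \vrt(G)$; and $\gamma(0,0) = 0$. Thus $\gamma \le 0$ on all of $\vrt(P \vee Q)$, and the set $W$ of vertices attaining equality consists exactly of the $(v_i,0)$ with $v_i \in \vrt(F) \setminus \{0\}$, the $(0,w_j)$ with $w_j \in \vrt(G) \setminus \{0\}$, and $(0,0)$.

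It then remains to identify $\conv(W)$ with $F \vee G$. Since $0$ is a vertex of both $F$ and $G$, the explicit convex-hull computation already carried out in Lemma~\ref{lemma:if_0_in_H_then_H=FvG} applies verbatim and gives $\conv(W) = F \vee G$. By the maximization criterion, $\conv(W)$ is a face of $P \vee Q$, so $F \vee G \in L(P \vee Q)$, and it contains the origin because $0 \in F$.

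I expect the only delicate point to be the bookkeeping in the second step: one must verify that no vertex of $P \vee Q$ outside the intended list attains $\gamma = 0$, which is precisely the content of the ``if and only if'' clauses governing $\alpha$ and $\beta$, and that $W$ is genuinely the vertex set of $F \vee G$ rather than a strict superset. Both are routine given Proposition~\ref{prop:vertices_of_PvQ} and the computation from the preceding lemma, so the main work is really just assembling the already-established ingredients in the right order.
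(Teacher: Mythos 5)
Your proposal is correct and follows essentially the same route as the paper: normalize the supporting functionals of $F$ and $G$ to have constant $0$ (possible because both faces contain the origin), combine them as $\gamma(x,y)=\alpha(x)+\beta(y)$, and check via Proposition~\ref{prop:vertices_of_PvQ} that the equality set is exactly $\vrt(F\vee G)$. You merely spell out the verification the paper leaves as ``one easily shows.''
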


\begin{proof}
    Write
    \begin{equation*}
        \vrt(P) = \{ v_1,\dots,v_s,0 \} \sqcup \{ v_{s+1},\dots,v_k \}
    \end{equation*}
    where $\{ v_1, \dots, v_s ,0\} = \vrt(F)$, and write
    \begin{equation*}
        \vrt(Q) = \{ w_1,\dots,w_{r},0 \} \sqcup \{ w_{r+1},\dots,w_{\ell}\}
    \end{equation*}
    where $\vrt(G) = \{ w_1,\dots,w_r ,0\}$.
    Now by Proposition \ref{prop:vertices_of_PvQ} we have
    \begin{equation*}
        \vrt(F \vee G) = \{ (v_1,0) ,\dots, (v_s,0), (0,w_1),\dots,(0,w_r),(0,0) \}.
    \end{equation*}
    Since $F \in L(P)$ and $G \in L(Q)$ where both $F$ and $G$ contain the origin, we find $\alpha \in (\mathbb{R}^m)^*$ and $\beta \in (\mathbb{R}^n)^*$ such that $\vrt(F)$ maximizes $\alpha(x) \le 0$ for $P$ and $\vrt(G)$ maximizes $\beta(y) \le 0$ for $Q$.
    Let $\gamma \in (\mathbb{R}^{m+n})^*$ be defined by $\gamma(x,y) = \alpha(x) + \beta(y)$.
    Now one easily shows that $\vrt(F \vee G)$ maximizes $\gamma(x,y) \le 0$ for $P \vee Q$.
    Thus $F \vee G \in L(P \vee Q)$.
\end{proof}

For the rest of this section we let
\begin{align*}
    \varphi \colon \mathbb{R}^{m+n+1} &\longrightarrow \mathbb{R}^{m+n} \\
    (x,y,z) &\longmapsto (x,y).
\end{align*}
We note that if $F \subseteq \mathbb{R}^m$ and $G \subseteq \mathbb{R}^n$ are any polytopes with $\vrt(F) = \{ v_1,\dots,v_s \}$ and $\vrt(G) = \{ w_1,\dots,w_r \}$, then
\begin{equation*}
    F*G = \conv((v_1,0,0),\dots,(v_s,0,0),(0,w_1,1),\dots,(0,w_r,1)) \subseteq \mathbb{R}^{m+n+1}
\end{equation*}
and thus
\begin{equation*}
    \varphi(F*G) = \conv((v_1,0),\dots,(v_s,0),(0,w_1),\dots,(0,w_r)).
\end{equation*}

\begin{lemma}\label{lemma:if_0_not_in_H_then_H_isom_F*G}
    Let $H \in L(P \vee Q)$ and suppose $0 \not\in H$.
    Then $\varphi(F*G) = H$ for some $F \in L(P)$ and $G \in L(Q)$ where $0 \not\in F$ and $0 \not\in G$.
\end{lemma}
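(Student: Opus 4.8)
The plan is to read the faces $F$ and $G$ directly off the vertices of $H$, and then to certify that they are genuine faces by splitting a supporting functional for $H$. First I would invoke Proposition~\ref{prop:vertices_of_PvQ} together with the hypothesis $0 \notin H$: since $H$ is a face of $P \vee Q$, its vertex set is contained in $\vrt(P \vee Q)$, and excluding the origin we may write
\begin{equation*}
    \vrt(H) = \{ (v_1,0),\dots,(v_s,0),(0,w_1),\dots,(0,w_r) \}
\end{equation*}
with $v_i \in \vrt(P) \setminus \{ 0 \}$ and $w_j \in \vrt(Q) \setminus \{ 0 \}$. I then set $F = \conv(v_1,\dots,v_s)$ and $G = \conv(w_1,\dots,w_r)$. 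Because each $v_i$ is a vertex of $P$ and lies in the subpolytope $F \subseteq P$, it is again a vertex of $F$, so $\vrt(F) = \{ v_1,\dots,v_s \}$ and likewise $\vrt(G) = \{ w_1,\dots,w_r \}$. The identity $\varphi(F*G) = H$ is then immediate from the formula for $\varphi(F*G)$ displayed just before the lemma.

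The substance of the argument is to show that $F$ and $G$ are faces avoiding the origin. Since $H \in L(P \vee Q)$, I would choose $\gamma \in (\mathbb{R}^{m+n})^*$ and $c \in \mathbb{R}$ so that $\vrt(H)$ maximizes $\gamma(x,y) \le c$ for $P \vee Q$, and write $\gamma(x,y) = \alpha(x) + \beta(y)$ with $\alpha \in (\mathbb{R}^m)^*$ and $\beta \in (\mathbb{R}^n)^*$. The crucial new feature, compared with Lemmas~\ref{lemma:if_0_in_H_then_H=FvG} and~\ref{lemma:if_0_in_F_and_0_in_G_then_FvG_in_L(PvQ)}, is that here $0 \notin H$ forces $c > 0$: the vertex $(0,0)$ is not a maximizer, so $0 = \gamma(0,0) < c$.

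I would then verify that $\vrt(F)$ maximizes $\alpha(x) \le c$ for $P$ by evaluating $\alpha$ on each vertex of $P$. For $v_i$ with $i \le s$ we have $\alpha(v_i) = \gamma(v_i,0) = c$; for a nonzero vertex $v$ of $P$ with $(v,0) \notin \vrt(H)$ we have $\alpha(v) = \gamma(v,0) < c$; and for the origin $\alpha(0) = 0 < c$. Thus $\alpha(x) \le c$ on all of $\vrt(P)$, with equality exactly on $\{ v_1,\dots,v_s \}$, so $F \in L(P)$; the same computation with $\beta$ gives $G \in L(Q)$. Finally, since $\alpha(0) = 0 < c$ the origin is not a maximizer, so $0 \notin \vrt(F)$; as $0$ is a vertex of $P$, any vertex of $P$ lying in the face $F$ would have to be a vertex of $F$, whence $0 \notin F$, and symmetrically $0 \notin G$.

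The main obstacle I anticipate is precisely the bookkeeping around the constant $c$: in the earlier lemmas the supporting value could be taken to be $0$, but here one must carry a strictly positive $c$ and exploit its positivity twice over — once to keep $\alpha(x) \le c$ and $\beta(y) \le c$ valid at the origin, and once to conclude that the origin fails to be a maximizer and is therefore excluded from both $F$ and $G$. I would also check that the degenerate cases $s = 0$ or $r = 0$ (where $F$ or $G$ is the empty face) cause no trouble, since both the displayed description of $\varphi(F*G)$ and the convention that the empty set is a face accommodate them.
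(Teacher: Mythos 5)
Your proposal is correct and follows essentially the same route as the paper's proof: read $F$ and $G$ off the vertex set of $H$, observe $\varphi(F*G)=H$, and split a supporting functional $\gamma(x,y)=\alpha(x)+\beta(y)$ for $H$ into supporting functionals for $F$ and $G$. The only difference is that you spell out the step the paper labels ``one easily shows,'' in particular the observation that $0\notin H$ forces $c>0$, which is exactly the detail needed to keep $\alpha(x)\le c$ valid at the origin and to exclude the origin from $F$ and $G$.
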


\begin{proof}
    Since $0 \not\in H$ we can write
    \begin{equation*}
        \vrt(H) = \{ (v_1,0),\dots,(v_s,0),(0,w_1),\dots,(0,w_r) \}
    \end{equation*}
    where $v_i \in \vrt(P) \setminus \{ 0 \}$ and $w_j \in \vrt(Q) \setminus \{ 0 \}$.
    Let $F = \conv(v_1,\dots,v_s)$ and $G = \conv(w_1, \dots, w_r)$.
    Since each $v_i \neq 0$ and since $0$ is a vertex of $P$ we have $0 \not\in F$.
    Similarly $0 \not\in G$.
    Now $\varphi(F*G) = H$.
    It remains to show that $F \in L(P)$ and $G \in L(Q)$.
    
    Since $H \in L(P \vee Q)$ we find some $\gamma \in \mathbb{R}^{m+n}$ and $c \in \mathbb{R}$ such that $\vrt(H)$ maximizes $\gamma(x,y) \le c$ for $P \vee Q$.
    We can write $\gamma(x,y) = \alpha(x) + \beta(y)$ for some $\alpha \in (\mathbb{R}^m)^*$ and $\beta \in (\mathbb{R}^n)^*$.
    One easily shows that $\vrt(F)$ maximizes $\alpha(x) \le c$ for $P$ and that $\vrt(G)$ maximizes $\beta(y) \le c$ for $Q$.
    Thus $F \in L(P)$ and $G \in L(Q)$.
\end{proof}

\begin{lemma}\label{lemma:if_0_not_in_F_and_0_not_in_G_then_F*G_in_L(PvQ)}
    If $F \in L(P)$ and $G \in L(Q)$ with $0 \not\in F$ and $0 \not\in G$, then
    \begin{equation*}
        \varphi(F*G) \in L(P \vee Q)
    \end{equation*}
    where $0 \not\in \varphi(F*G)$.
\end{lemma}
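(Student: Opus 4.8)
The plan is to mirror the proof of Lemma~\ref{lemma:if_0_in_F_and_0_in_G_then_FvG_in_L(PvQ)}, exhibiting an explicit linear functional on $\mathbb{R}^{m+n}$ that is maximized over $P \vee Q$ exactly along $\varphi(F*G)$. Write $\vrt(F) = \{ v_1,\dots,v_s \}$ and $\vrt(G) = \{ w_1,\dots,w_r \}$; since $0 \not\in F$ and $0 \not\in G$ while $0 \in \vrt(P)$ and $0 \in \vrt(Q)$, none of the $v_i$ or $w_j$ is the origin. By the computation recorded just before the lemma, $\varphi(F*G) = \conv((v_1,0),\dots,(v_s,0),(0,w_1),\dots,(0,w_r))$, and each of these points lies in $\vrt(P \vee Q)$ by Proposition~\ref{prop:vertices_of_PvQ}. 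So it suffices to show that the set $W \coloneqq \{ (v_1,0),\dots,(v_s,0),(0,w_1),\dots,(0,w_r) \}$ maximizes some equation for $P \vee Q$.

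Because $F \in L(P)$ with $0 \not\in F$, I would choose $\alpha \in (\mathbb{R}^m)^*$ and $c_1 \in \mathbb{R}$ so that $\vrt(F)$ maximizes $\alpha(x) \le c_1$ for $P$; similarly choose $\beta \in (\mathbb{R}^n)^*$ and $c_2 \in \mathbb{R}$ so that $\vrt(G)$ maximizes $\beta(y) \le c_2$ for $Q$. The key observation is that $c_1, c_2 > 0$: indeed $0 \in \vrt(P)$ but $0 \not\in \vrt(F)$, so $0 = \alpha(0) < c_1$, and likewise $0 < c_2$. This positivity is exactly what distinguishes the present case from Lemma~\ref{lemma:if_0_in_F_and_0_in_G_then_FvG_in_L(PvQ)}, where the relevant bound was $0$ and the origin lay on the face.

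Then I would set $\gamma(x,y) \coloneqq c_2\alpha(x) + c_1\beta(y)$ and $c \coloneqq c_1 c_2$, and verify that $W$ maximizes $\gamma(x,y) \le c$ for $P \vee Q$. For $v_i \in \vrt(F)$ we have $\gamma(v_i,0) = c_2\alpha(v_i) = c_2 c_1 = c$, and for $w_j \in \vrt(G)$ we have $\gamma(0,w_j) = c_1\beta(w_j) = c$, so $W$ lies on the bounding hyperplane. For a vertex $(v,0)$ with $v \in \vrt(P) \setminus (\vrt(F) \cup \{ 0 \})$ we get $\gamma(v,0) = c_2\alpha(v) < c_2 c_1 = c$ using $\alpha(v) < c_1$ and $c_2 > 0$, and symmetrically for the unused vertices of $Q$; finally $\gamma(0,0) = 0 < c$ since $c = c_1 c_2 > 0$. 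Thus every vertex of $P \vee Q$ satisfies $\gamma \le c$ with equality precisely on $W$, so $\varphi(F*G) = \conv(W)$ is a face of $P \vee Q$. Moreover $0 \not\in \varphi(F*G)$, because every point of this face satisfies $\gamma = c$ whereas $\gamma(0,0) = 0 \ne c$.

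The argument is essentially a direct computation once the functionals are scaled correctly; the only genuinely substantive point is the positivity $c_1, c_2 > 0$, which is what forces the origin strictly below the maximizing hyperplane and hence off the resulting face. I expect no real obstacle beyond carefully tracking the three types of vertices of $P \vee Q$ recorded in Proposition~\ref{prop:vertices_of_PvQ}.
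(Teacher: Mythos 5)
Your proof is correct and follows essentially the same route as the paper: both construct the separating functional by rescaling $\alpha$ and $\beta$ by the (nonzero) optimal values and checking all three types of vertices of $P \vee Q$; your $\gamma(x,y) = c_2\alpha(x) + c_1\beta(y) \le c_1c_2$ is just the paper's $\alpha(x)/a + \beta(y)/b \le 1$ multiplied through by $ab$. If anything you are slightly more careful, since you justify $c_1, c_2 > 0$ (needed for the strict inequalities to point the right way), whereas the paper only records $a, b \neq 0$.
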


\begin{proof}
    Let $F \in L(P)$ and $G \in L(Q)$ where $0 \not\in F$ and $0 \not\in G$.
    Write
    \begin{equation*}
        \vrt(P) = \{ v_1,\dots,v_s\} \sqcup \{v_{s+1},\dots,v_k,0 \}
    \end{equation*}
    where $\vrt(F) = \{ v_1, \dots, v_s \}$, and write
    \begin{equation*}
        \vrt(Q) = \{ w_1,\dots,w_{r}\} \sqcup \{w_{r+1},\dots,w_\ell,0 \} 
    \end{equation*}
    where $\vrt(G) = \{ w_1, \dots, w_r \}$.
    Let 
    \begin{equation*}
        H = \conv((v_1,0),\dots,(v_s,0),(0,w_1),\dots,(0,w_r)).
    \end{equation*}
    Now $\varphi(F*G) = H$.
    It remains to show that $H \in L(P\vee Q)$.
    
    Since $F \in L(P)$ we find some $\alpha \in (\mathbb{R}^m)^*$ and $a \in \mathbb{R}$ such that $\vrt(F)$ maximizes $\alpha(x) \le a$ for $P$.
    Since $0 \not\in F$ and $0 \in \vrt(P)$ we need to have $a \neq 0$.
    Similarly, we find $\beta \in (\mathbb{R}^n)^*$ and $b \in \mathbb{R}$ such that $\vrt(G)$ maximizes $\beta(y) \le b$ for $Q$ where $b \neq 0$.
    Define $\gamma \in (\mathbb{R}^{m+n})^*$ by
    \begin{equation*}
        \gamma(x,y) = \frac{\alpha(x)}{a} + \frac{\beta(y)}{b}.
    \end{equation*}
    One easily shows that $\vrt(H)$ maximizes $\gamma(x,y) \le 1$ for $P \vee Q$.
    Thus $H \in L(P \vee Q)$.
    Since $\gamma(0,0) < 1$, we have $0 \not \in H$.
\end{proof}
We are now ready to give a full description of the faces of $P \vee Q$.
The following proposition is essentially Proposition 2.3 from \cite{McMullen}.

\begin{proposition}\label{prop:faces_of_PvQ}
    The maps
    \begin{align*}
        \{ F \in L(P) \mid 0 \in F \} \times \{ G \in L(Q) \mid 0 \in G \} &\longrightarrow \{ H \in L(P \vee Q) \mid 0 \in H \} \\
        (F,G) &\longmapsto F \vee G
    \end{align*}
    and
    \begin{align*}
        \{ F \in L(P) \mid 0 \not\in F \} \times \{ G \in L(Q) \mid 0 \not\in G \} &\longrightarrow \{ H \in L(P \vee Q) \mid 0 \not\in H \} \\
        (F,G) &\longmapsto \varphi(F * Q)
    \end{align*}
    are bijections.
\end{proposition}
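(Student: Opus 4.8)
The plan is to notice that the four lemmas preceding this proposition already supply everything except injectivity, and then to establish injectivity by reconstructing the pair $(F,G)$ from the vertex set of its image. Concretely, Lemma~\ref{lemma:if_0_in_F_and_0_in_G_then_FvG_in_L(PvQ)} shows that the first map is well-defined, in the sense that $F \vee G$ genuinely lands in $\{ H \in L(P \vee Q) \mid 0 \in H \}$, and Lemma~\ref{lemma:if_0_in_H_then_H=FvG} shows that it is surjective. Symmetrically, Lemma~\ref{lemma:if_0_not_in_F_and_0_not_in_G_then_F*G_in_L(PvQ)} gives well-definedness of the second map and Lemma~\ref{lemma:if_0_not_in_H_then_H_isom_F*G} gives its surjectivity. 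So the only task left is to prove that each map is injective, after which both are bijections.

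For injectivity I would argue that the target face determines $(F,G)$ by reading off which coordinates vanish. Let $\pi \colon \mathbb{R}^{m+n} \to \mathbb{R}^m$ and $\rho \colon \mathbb{R}^{m+n} \to \mathbb{R}^n$ be the two coordinate projections. For the first map, suppose $F \vee G$ is given with $0 \in F$ and $0 \in G$. By Proposition~\ref{prop:vertices_of_PvQ} its vertex set is $\{(v_1,0),\dots,(v_s,0),(0,w_1),\dots,(0,w_r),(0,0)\}$, where $\{v_1,\dots,v_s,0\} = \vrt(F)$ and $\{w_1,\dots,w_r,0\} = \vrt(G)$. Hence the vertices of $F \vee G$ lying in $\mathbb{R}^m \times \{0\}^n$ are carried by $\pi$ exactly onto $\vrt(F)$, and those lying in $\{0\}^m \times \mathbb{R}^n$ are carried by $\rho$ exactly onto $\vrt(G)$. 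Thus $F$ and $G$ are recovered from $F \vee G$, giving injectivity. For the second map, suppose $\varphi(F*G)$ is given with $0 \notin F$ and $0 \notin G$. As computed just before Lemma~\ref{lemma:if_0_not_in_H_then_H_isom_F*G}, the face $H = \varphi(F*G)$ equals $\conv((v_1,0),\dots,(v_s,0),(0,w_1),\dots,(0,w_r))$ with $\vrt(F) = \{v_1,\dots,v_s\}$, $\vrt(G) = \{w_1,\dots,w_r\}$, and every $v_i,w_j$ nonzero; since each listed point is a vertex of $P \vee Q$ lying in $H$, these points are precisely $\vrt(H)$. Because all $v_i \neq 0$ and all $w_j \neq 0$, a vertex of $H$ has vanishing $\rho$-coordinate exactly when it is some $(v_i,0)$, and vanishing $\pi$-coordinate exactly when it is some $(0,w_j)$. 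Therefore $\vrt(F) = \pi(\{u \in \vrt(H) : \rho(u) = 0\})$ and $\vrt(G) = \rho(\{u \in \vrt(H) : \pi(u) = 0\})$, so $(F,G)$ is again determined by $H$.

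The step that carries the real content is this reconstruction of $(F,G)$ from the vertex set, so I expect the main care to go into describing the vertices of the image correctly. For the first map one must keep in mind that the shared origin is the unique overlap and belongs to both $F$ and $G$, so it contributes no information separating the two factors; for the second map the argument relies crucially on $0 \notin F$ and $0 \notin G$, which is exactly what makes the $\pi$- and $\rho$-vanishing conditions partition $\vrt(H)$ cleanly into the two groups. Everything else is bookkeeping layered on top of the earlier lemmas.
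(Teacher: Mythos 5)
Your proof is correct, and its skeleton---well-definedness and surjectivity of both maps from the four preceding lemmas, leaving only injectivity to check---is exactly the paper's. Where you differ is in how injectivity is established. The paper argues directly with convex combinations: assuming $F \vee G = F' \vee G'$, it takes a vertex $v \in \vrt(F)$, expands $(v,0)$ as a convex combination of the generators of $F' \vee G'$, and shows $v$ would otherwise be a convex combination of other vertices of $P$ (using $0 \in \vrt(P)$ to absorb the slack coefficient), concluding $\vrt(F) \subseteq \vrt(F')$ and then equality by symmetry. You instead reconstruct $(F,G)$ intrinsically from the image face $H$: you invoke Proposition~\ref{prop:vertices_of_PvQ} (applied to $F \vee G$, which is legitimate since $0$ is a vertex of any face of $P$ containing it) to pin down $\vrt(H)$, and then recover $\vrt(F)$ and $\vrt(G)$ as the projections of the vertices with vanishing $\rho$- and $\pi$-coordinates respectively. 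This is a cleaner route that avoids redoing the convexity computation, and your attention to the two delicate points---that the shared origin belongs to both factors in the first map, and that $0 \notin F$, $0 \notin G$ is what makes the coordinate-vanishing conditions cleanly separate $\vrt(H)$ in the second---is exactly where the care is needed. The only mild cost is that your argument leans on having the full vertex description of $F \vee G$ and of $\varphi(F*G)$ available, whereas the paper's convex-combination argument is self-contained at that point; both are sound.
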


\begin{proof}
    Let us look at the first map.
    By Lemma \ref{lemma:if_0_in_F_and_0_in_G_then_FvG_in_L(PvQ)} the map is well-defined.
    By Lemma~\ref{lemma:if_0_in_H_then_H=FvG} the map is surjective.
    We show injectivity.
    Suppose
    \begin{equation*}
        F = \conv(V), \quad F' = \conv(V'), \quad G=\conv(W), \quad G'=\conv(W'),
    \end{equation*}
    for some $V,V' \subseteq \vrt(P)$ and $W,W' \subseteq \vrt(Q)$, where \mbox{$0 \in V,V'$} and $0 \in W,W'$.
    Assume $F \vee G = F' \vee G'$.
    Let $v \in V$.
    Now
    \begin{equation*}
        (v,0) \in F \vee G = F' \vee G' = \conv(V' \times \{ 0 \}^n \cup \{ 0 \}^m \times W').
    \end{equation*}
    From this we obtain
    \begin{equation*}
        v = \sum_{v' \in V' \setminus \{ 0 \}} t_{v'} v', \quad t_{v'} \ge 0, \quad \sum_{v' \in V' \setminus \{ 0 \}} t_{v'} \le 1.
    \end{equation*}
    If $v = 0$ then $v \in V'$.
    Assume $v \neq 0$.
    If we had $v \not\in V'$ then we could write $v$ as a convex combination
    \begin{equation*}
        v = \sum_{v' \in V' \setminus \{ 0 \}} t_{v'} v' + \left( 1 - \sum_{v' \in V' \setminus \{ 0 \}} t_{v'} \right)0
    \end{equation*}
    which would be a contradiction as now $v$ is in the convex hull of other vertices of $P$.
    Thus in both cases $v \in V'$.
    Therefore $V \subseteq V'$.
    By symmetry, $V' \subseteq V$ and hence $F=F'$.
    One can show $W=W'$ in exactly the same way.
    Therefore $(F,G) = (F',G')$ and hence the first map is injective.
    
    Then we look at the second map.
    By Lemma \ref{lemma:if_0_not_in_F_and_0_not_in_G_then_F*G_in_L(PvQ)} the map is well-defined.
    By Lemma~\ref{lemma:if_0_not_in_H_then_H_isom_F*G} the map is surjective.
    It remains to show injectivity.
    Let $F,F',G,G'$ be as above but this time $0 \not\in V,V'$ and $0 \not\in W,W'$.
    Suppose $\varphi(F*G) = \varphi(F'*G')$.
    Let $v \in V$.
    Now
    \begin{equation*}
        (v,0) \in \varphi(F*G) = \varphi(F'*G') = \conv(V' \times \{ 0 \}^n \cup \{ 0 \}^m \times W').
    \end{equation*}
    From this we obtain
    \begin{equation*}
        v = \sum_{v' \in V'} t_{v'}v', \quad t_{v'} \ge 0, \quad \sum_{v' \in V'} t_{v'} \le 1.
    \end{equation*}
    Similarly as before, if we had $v \not\in V'$ then we could write $v$ as a convex combination
    \begin{equation*}
        v = \sum_{v' \in V'} t_{v'}v' + \left( 1 - \sum_{v'\in V'}t_{v'} \right)0
    \end{equation*}
    and hence $v$ would be a convex combination of other vertices of $P$.
    Thus we must have $v \in V'$ and therefore $V \subseteq V'$.
    By symmetry $V' \subseteq V$.
    One shows $W=W'$ in the same way. 
    Thus $(F,G) = (F',G')$.
    Hence the second map is also injective.
\end{proof}

In Proposition 2.3 in his paper \cite{McMullen}, McMullen describes the faces of $P \vee Q \subseteq \mathbb{R}^{m+n}$ as being either of the form $F \vee G$ with $0 \in F$ and $0 \in G$, or of the form $F*G$ where $0 \not\in F$ and $0 \not\in G$.
The statement of Proposition \ref{prop:faces_of_PvQ} above is essentially this with the difference that one has to project the faces $F*G \subseteq \mathbb{R}^{m+n+1}$ down with the projection $\varphi$ in order to get faces in the right ambient space.
This projection turns out to be an injection (and hence provides an isomorphism) on the faces of the form $F*G$ with $0 \not \in F$ and $0 \not \in G$.

\begin{lemma}\label{lemma:projection_injective_on_R*S}
    Let $R \subseteq \mathbb{R}^m$ and $S \subseteq \mathbb{R}^n$ be any polytopes such that $0 \not \in \aff(R)$ and \mbox{$0 \not \in \aff(S)$}.
    Then $\varphi$ is injective on $\aff(R*S)$.
    In particular, $\varphi$ is injective on $R*S$ and therefore $R*S \cong \varphi(R*S)$.
\end{lemma}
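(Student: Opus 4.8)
The plan is to recast the injectivity of the linear map $\varphi$ on the affine subspace $\aff(R*S)$ as a statement purely about its direction space. Since $\varphi$ is linear with $\ker\varphi = \{(0,0,z) : z \in \mathbb{R}\}$, it is injective on an affine subspace $\mathcal{A}$ if and only if the linear subspace $U$ parallel to $\mathcal{A}$ satisfies $U \cap \ker\varphi = \{0\}$. Thus it suffices to show that the only direction vector of $\aff(R*S)$ lying on the $z$-axis is the zero vector.

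To describe $U$ concretely, I would use the vertex description of $R*S$ recalled just before the lemma. Writing $\vrt(R) = \{v_1,\dots,v_s\}$ and $\vrt(S) = \{w_1,\dots,w_r\}$, every vector of $U$ is a difference of two affine combinations of the spanning points $(v_i,0,0)$ and $(0,w_j,1)$, hence has the form
\begin{equation*}
    \sum_{i=1}^s a_i (v_i,0,0) + \sum_{j=1}^r b_j (0,w_j,1), \qquad \sum_{i=1}^s a_i + \sum_{j=1}^r b_j = 0.
\end{equation*}
Such a vector equals $\bigl(\sum_i a_i v_i,\ \sum_j b_j w_j,\ \sum_j b_j\bigr)$, so it lies on the $z$-axis precisely when $\sum_i a_i v_i = 0$ and $\sum_j b_j w_j = 0$, in which case its last coordinate is $\sum_j b_j$.

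The key step is to rule out a nonzero last coordinate. If $\sum_j b_j \neq 0$, then dividing the relation $\sum_j b_j w_j = 0$ by $\sum_j b_j$ writes the origin as an affine combination of the vertices $w_j$ of $S$, i.e.\ $0 \in \aff(S)$, contradicting the hypothesis. (Equally, one could divide $\sum_i a_i v_i = 0$ by $\sum_i a_i = -\sum_j b_j$ and contradict $0 \notin \aff(R)$.) Hence $\sum_j b_j = 0$, the direction vector is the zero vector, and $U \cap \ker\varphi = \{0\}$, so $\varphi$ is injective on $\aff(R*S)$.

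For the ``in particular'' claims, I would simply note that $R*S \subseteq \aff(R*S)$, so injectivity on the affine hull restricts to injectivity on $R*S$; and an injective affine map restricts to an affine isomorphism onto its image, which carries faces to faces and therefore gives a polytope isomorphism $R*S \cong \varphi(R*S)$. The argument is short, and the only real subtlety is the affine bookkeeping: tracking that direction vectors carry coefficients summing to $0$ (rather than $1$) and invoking exactly one of the two hypotheses $0\notin\aff(R)$, $0\notin\aff(S)$ at the decisive step.
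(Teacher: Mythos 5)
Your proof is correct. It takes a mildly different route from the paper's: the paper fixes affine bases $a_0,\dots,a_d$ of $\aff(R)$ and $b_0,\dots,b_e$ of $\aff(S)$, observes that $0\notin\aff(R)$ and $0\notin\aff(S)$ make $a_0,\dots,a_d,0$ and $b_0,\dots,b_e,0$ affinely independent, and then shows $\varphi(x)=\varphi(y)$ forces equality of the coefficients of $x$ and $y$ in the induced affine basis of $\aff(R*S)$. You instead reduce injectivity on the affine hull to the statement that its direction space meets $\ker\varphi=\{(0,0,z)\}$ trivially, expand a direction vector over the (not necessarily independent) vertex sets with coefficients summing to $0$, and use the hypotheses to rule out a nonzero $z$-component. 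The underlying mechanism is identical --- in both arguments the hypothesis $0\notin\aff(\cdot)$ is exactly what forbids a vanishing combination $\sum c_i p_i=0$ with $\sum c_i\neq 0$ --- but your kernel-versus-direction-space formulation avoids choosing a basis and needs only the spanning property of the vertices, which makes the bookkeeping slightly lighter; the paper's version gives uniqueness of coefficients for free, which it then exploits. Both are complete; just make sure to state (as you implicitly do) that $\aff(S)=\aff(w_1,\dots,w_r)$ when you conclude $0\in\aff(S)$ from the rescaled relation.
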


\begin{proof}
    Let $a_0,\dots,a_d \in \mathbb{R}^m$ be an affine basis for $\aff(R)$ and let $b_0,\dots,b_e \in \mathbb{R}^n$ be an affine basis for $\aff(S)$.
    Since $0 \not\in \aff(R) = \aff(a_0,\dots,a_d)$, the points $a_0,\dots,a_d,0$ are affinely independent.
    Similarly $b_0,\dots,b_e,0$ are affinely independent.
    Now
    \begin{equation*}
        (a_0,0,0),\dots,(a_d,0,0),(0,b_0,1),\dots,(0,b_e,1) \in \mathbb{R}^{m+n+1}
    \end{equation*}
    is an affine basis for $\aff(R*S)$.
    Suppose $x,y \in \aff(R*S)$ are such that $\varphi(x) = \varphi(y)$.
    We now have affine combinations
    \begin{align*}
        x &= \sum_{i=0}^d t_i (a_i,0,0) + \sum_{j=0}^e s_j(0,b_j,1)\\
        y&= \sum_{i = 0}^d t_i'(a_i,0,0) + \sum_{j=0}^e s_j'(0,b_j,1).
    \end{align*}
    Since $\varphi(x) = \varphi(y)$ we get
    \begin{equation*}
        \sum_{i=0}^d t_i(a_i,0) + \sum_{j=0}^e s_j(0,b_j) = \sum_{i=0}^d t_i'(a_i,0) + \sum_{j=0}^e s_j'(0,b_j).
    \end{equation*}
    We thus have affine combinations
    \begin{align*}
        \sum_{i=0}^d t_ia_i + (1 - t_0 - \dots - t_d)0  &= \sum_{i=0}^d t_i'a_i + (1 - t_0' - \dots - t_d')0 \\
        \sum_{j=0}^e s_jb_j + (1-s_0-\dots - s_e)0 &= \sum_{j=0}^e s_j' b_j + (1 - s_0' - \dots - s_e')0.
    \end{align*}
    Since $a_0,\dots,a_d,0$ are affinely independent, the first equation implies $t_i = t_i'$ for all $i = 0,\dots,d$.
    Since $b_0,\dots,b_e,0$ are affinely independent, the second equation implies $s_j = s_j'$ for all $j = 0 ,\dots, e$.
    Hence $x=y$.
    Thus $\varphi$ is injective on $\aff(R*S)$.
\end{proof}

From Lemma \ref{lemma:projection_injective_on_R*S} it follows that if $F \in L(P)$ and $G \in L(Q)$ are such that $0 \not \in F$ and $0 \not\in G$, then $F*G \cong \varphi(F*G)$.

\section{Order and chain polytopes from ordinal sums}\label{section:O(A<B)_and_C(A<B)}
In this section, we quickly look at how the order and chain polytopes behave under ordinal sums.
Let $A$ and $B$ be posets on disjoint sets. 
At the beginning of Section \ref{section:PvQ} we saw that $\mathcal{C}(A < B) = \mathcal{C}(A) \vee \mathcal{C}(B)$.
We state this as a separate result for future reference.

\begin{proposition}\label{prop:C(A<B)=C(A)VC(B)}
    If $A$ and $B$ are posets on disjoint sets, then
    \begin{equation*}
        \mathcal{C}(A < B) = \mathcal{C}(A) \vee \mathcal{C}(B).
    \end{equation*}
\end{proposition}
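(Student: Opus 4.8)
The plan is to reduce the entire statement to the combinatorial description of the vertices of the chain polytope, since at heart this is a claim about how antichains of the ordinal sum decompose. The crucial observation is that in $A < B$ every element of $A$ lies below every element of $B$, so any $a \in A$ and $b \in B$ are comparable in $A < B$. Consequently no antichain of $A < B$ can contain both an element of $A$ and an element of $B$, which forces every antichain of $A < B$ to lie entirely in $A$ or entirely in $B$. Conversely, since the induced orders on $A$ and on $B$ are unchanged by forming the ordinal sum, every antichain of $A$ and every antichain of $B$ remains an antichain of $A < B$. This gives the set equality
\begin{equation*}
\{\text{antichains of } A < B\} = \{\text{antichains in } A\} \cup \{\text{antichains in } B\}.
\end{equation*}

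Next I would translate this into a statement about vertices. Recall that the vertices of $\mathcal{C}(\mathcal{P})$ are the indicator vectors $e_{\mathcal{A}}$ of antichains $\mathcal{A}$. Under the identification $\mathbb{R}^{A \cup B} = \mathbb{R}^A \times \mathbb{R}^B$, an antichain $\mathcal{A} \subseteq A$ gives a vector supported only on the $A$-coordinates, namely $(e_{\mathcal{A}},0) \in \vrt(\mathcal{C}(A)) \times \{0\}^B$, and symmetrically an antichain of $B$ gives a point of $\{0\}^A \times \vrt(\mathcal{C}(B))$. Combined with the antichain decomposition above, this yields
\begin{equation*}
\vrt(\mathcal{C}(A < B)) = \vrt(\mathcal{C}(A)) \times \{0\}^B \cup \{0\}^A \times \vrt(\mathcal{C}(B)).
\end{equation*}

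Finally I would take convex hulls of both sides. Since any polytope is the convex hull of its vertices, the left side becomes $\mathcal{C}(A < B)$, while the right side is exactly the defining expression for $\mathcal{C}(A) \vee \mathcal{C}(B)$ from Section \ref{section:PvQ}. One subtlety worth flagging explicitly is that the construction $P \vee Q$ is only defined when $0$ is a vertex of both $P$ and $Q$; this hypothesis holds for chain polytopes because the empty antichain gives $e_{\emptyset} = 0$, so indeed $0 \in \vrt(\mathcal{C}(A))$ and $0 \in \vrt(\mathcal{C}(B))$. I do not expect any genuine obstacle here: all the content sits in the antichain observation, which is immediate from the definition of the ordinal sum, and the remaining steps are routine passages between antichains, their indicator vectors, and convex hulls.
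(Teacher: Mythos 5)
Your argument is exactly the one the paper gives (at the start of Section \ref{section:PvQ}, restated as Proposition \ref{prop:C(A<B)=C(A)VC(B)}): antichains of $A<B$ are precisely the antichains of $A$ together with those of $B$, hence the vertex sets decompose and taking convex hulls gives the claim. Your explicit check that $0$ is a vertex of both chain polytopes, so that $\vee$ is defined, is a sensible addition that the paper leaves implicit.
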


Almost as easily we obtain a similar result for the order polytope.

\begin{proposition}\label{prop:O(A<B)=O(A)VO(Bop)}
    If $A$ and $B$ are posets on disjoint sets, then
    \begin{equation*}
        \mathcal{O}(A < B) \cong \mathcal{O}(A) \vee \mathcal{O}(B^{\op}).
    \end{equation*}
\end{proposition}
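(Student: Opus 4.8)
The plan is to mimic the argument that gave $\mathcal{C}(A<B)=\mathcal{C}(A)\vee\mathcal{C}(B)$ at the start of Section \ref{section:PvQ}, but tracking filters instead of antichains and correcting for the fact that the two pieces of $\mathcal{O}(A<B)$ are glued not at the origin but at the vertex where all $B$-coordinates equal $1$. First I would determine the filters of $A<B$. Since every element of $B$ lies above every element of $A$ in $A<B$, a filter $F$ either contains no element of $A$, in which case $F$ is a filter of $B$, or contains some element of $A$, in which case it must contain all of $B$ and $F\cap A$ is a filter of $A$. Hence the filters of $A<B$ are exactly the filters $F_B$ of $B$ together with the sets $F_A\cup B$ for $F_A$ a filter of $A$. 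Writing vertices in coordinates $\mathbb{R}^{A\cup B}=\mathbb{R}^A\times\mathbb{R}^B$, the former give the vertices $(0,e_{F_B})$ and the latter give $(e_{F_A},e_B)$, so that
\begin{equation*}
\mathcal{O}(A<B)=\conv\bigl((\mathcal{O}(A)\times\{e_B\})\cup(\{0\}^A\times\mathcal{O}(B))\bigr),
\end{equation*}
where the two copies meet exactly in the single point $(0,e_B)$, the vertex coming from the filter $B$ (equivalently, from the empty filter of $A$).

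Second, I would apply the affine involution $\Phi(x,y)=(x,e_B-y)$ on $\mathbb{R}^A\times\mathbb{R}^B$. By the complementation isomorphism $\mathcal{O}(B)\to\mathcal{O}(B^{\op})$, $y\mapsto e_B-y$, recalled in Section \ref{section:preliminaries}, we have $\Phi(\{0\}^A\times\mathcal{O}(B))=\{0\}^A\times\mathcal{O}(B^{\op})$, while $\Phi(\mathcal{O}(A)\times\{e_B\})=\mathcal{O}(A)\times\{0\}^B$, and the gluing point maps to the origin $(0,0)$. Since $\Phi$ is an affine isomorphism it commutes with taking convex hulls, so
\begin{equation*}
\Phi(\mathcal{O}(A<B))=\conv\bigl((\mathcal{O}(A)\times\{0\}^B)\cup(\{0\}^A\times\mathcal{O}(B^{\op}))\bigr)=\mathcal{O}(A)\vee\mathcal{O}(B^{\op}),
\end{equation*}
which is exactly the definition of the subdirect sum. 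This yields the desired isomorphism.

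The main obstacle is conceptual rather than computational: one must notice that, unlike for the chain polytope, the two copies of the order polytope inside $\mathcal{O}(A<B)$ are glued at the all-ones vertex on the $B$-block rather than at the origin. This is precisely why the opposite poset $B^{\op}$ appears, and why the statement is only an isomorphism rather than an equality; the complementation $y\mapsto e_B-y$ is needed to move the gluing point to the origin so that the construction $\vee$ applies. A minor point to verify carefully is that the complement within $B$ of a filter of $B$ is a filter of $B^{\op}$, which makes the vertex correspondence match Proposition \ref{prop:vertices_of_PvQ}; working at the level of whole polytopes as above, however, avoids having to track individual vertices.
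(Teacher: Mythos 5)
Your proposal is correct and follows essentially the same route as the paper: identify the filters of $A<B$ as filters of $B$ together with sets $F_A\cup B$, deduce $\mathcal{O}(A<B)=\conv(\mathcal{O}(A)\times\{\bone\}^B\cup\{0\}^A\times\mathcal{O}(B))$, and apply the affine map complementing the $B$-coordinates to land on $\mathcal{O}(A)\vee\mathcal{O}(B^{\op})$. The paper's proof is the same argument with the map written coordinate-wise.
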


\begin{proof}
    Since 
    \begin{equation*}
        \{ \text{filters of } A < B \} = \{ F \cup B \mid F \text{ filter in } A \} \cup \{ \text{filters in } B \}
    \end{equation*}
    we have
    \begin{equation*}
        \vrt(\mathcal{O}(A < B)) = \vrt(\mathcal{O}(A)) \times \{ 1 \}^B \cup \vrt(\mathcal{O}(B)) \times \{ 0 \}^A.
    \end{equation*}
    Thus 
    \begin{equation*}
        \mathcal{O}(A < B) = \conv(\mathcal{O}(A) \times \{ 1 \}^B \cup \mathcal{O}(B) \times \{ 0 \}^A).
    \end{equation*}
    Recall that by definition
    \begin{equation*}
        \mathcal{O}(A) \vee \mathcal{O}(B^{\op}) = \conv(\mathcal{O}(A) \times \{ 0 \}^B \cup \mathcal{O}(B^{\op}) \times \{ 0 \}^A).
    \end{equation*}
    Define $f \colon \mathbb{R}^{A \cup B} \to \mathbb{R}^{A \cup B}$ by
    \begin{equation*}
        f(x)_i = 
        \begin{cases}
            x_i & i \in A \\
            1-x_i & i \in B
        \end{cases}
    \end{equation*}
    for all $x \in \mathbb{R}^{A \cup B}$ and all $i \in A \cup B$.
    Here $f$ is a bijective affine map.
    In particular, $f$ is injective on $\mathcal{O}(A < B)$ so it remains to show that it maps $\mathcal{O}(A < B)$ onto $\mathcal{O}(A) \vee \mathcal{O}(B^{\op})$.
    Recall that the map
    \begin{align*}
        \mathbb{R}^B &\longrightarrow \mathbb{R}^B  \\
        x &\longmapsto \bone - x
    \end{align*}
    gives an isomorphism $\mathcal{O}(B) \to \mathcal{O}(B^{\op})$.
    We therefore have
    \begin{align*}
        f(\mathcal{O}(A) \times \{ 1 \}^B) &= \mathcal{O}(A) \times \{ 0 \}^B \\ f(\mathcal{O}(B) \times \{ 0 \}^A) &= \mathcal{O}(B^{\op}) \times \{ 0 \}^A.
    \end{align*}
    Since $f$ is affine, it preserves convex combinations.
    Thus
    \begin{align*}
        f(\mathcal{O}(A < B)) &= f(\conv(\mathcal{O}(A) \times \{ 1 \}^B \cup \mathcal{O}(B) \times \{ 0 \}^A)) \\
        &= \conv(f(\mathcal{O}(A) \times \{ 1 \}^B \cup \mathcal{O}(B) \times \{ 0 \}^A )) \\
        &= \conv(\mathcal{O}(A) \times \{ 0 \}^B \cup \mathcal{O}(B^{\op}) \times \{ 0 \}^A) \\
        &= \mathcal{O}(A) \vee \mathcal{O}(B^{\op}).
    \end{align*}
\end{proof}

Since $\mathcal{O}(B) \cong \mathcal{O}(B^{\op})$, it is clear that the recursive formulas in Propositions~\ref{prop:C(A<B)=C(A)VC(B)} and \ref{prop:O(A<B)=O(A)VO(Bop)} are very similar.
However, note that the isomorphism in Proposition \ref{prop:O(A<B)=O(A)VO(Bop)} does not preserve the origin.
Since the subdirect sum $\vee$ is defined with respect to prescribed vertex~0, it is thus possible (and usual), that $\mathcal{O}(A<B) \not\cong \mathcal{C}(A<B)$, even when we would have $\mathcal{O}(A) \cong \mathcal{C}(A)$ and $\mathcal{O}(B) \cong \mathcal{C}(B)$.
For example, let $A<B$ be the $X$-poset from Figure~\ref{fig:Xposet} where $A$ is the two element antichain consisting of the minimal elements, and $B$ is the 3~element V-shaped poset consisting of the rest of the elements.

\section{$f$-Polynomials}\label{section:f_polynomials}
In this section, we look at $f$-polynomials of polytopes.
We start by computing the $f$-polynomial of the polytope $P \vee Q$.

\begin{proposition}\label{prop:f_poly_of_PvQ}
    Let $P$ and $Q$ be polytopes both containing the origin as a vertex.
    The $f$-polynomial of $P \vee Q$ is given by
    \begin{equation*}
        f_{P \vee Q} = \frac{1}{x} f_P^0 f_Q^0 + f_P^1f_Q^1.
    \end{equation*}
\end{proposition}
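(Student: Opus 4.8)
The plan is to split the sum defining $f_{P \vee Q}$ according to whether a face contains the origin, writing $f_{P \vee Q} = f_{P\vee Q}^0 + f_{P\vee Q}^1$, and to evaluate the two pieces separately using the two bijections of Proposition~\ref{prop:faces_of_PvQ} together with the dimension formulas already established. The entire computation reduces to tracking how the exponent $\dim(\cdot)+1$ behaves under the two face-producing operations $\vee$ and $\varphi(\,\cdot * \cdot\,)$.

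First I would handle $f_{P\vee Q}^0$. By the first bijection of Proposition~\ref{prop:faces_of_PvQ}, the faces $H$ of $P\vee Q$ containing the origin are exactly the faces $F \vee G$ with $F \in L(P)$, $G \in L(Q)$, $0 \in F$, $0 \in G$. Applying Proposition~1 (the dimension formula, whose proof works verbatim for any two polytopes containing the origin) to the faces $F$ and $G$ gives $\dim(F \vee G) = \dim F + \dim G$, so that
\begin{align*}
f_{P \vee Q}^0 &= \sum_{\substack{F \in L(P),\, 0 \in F \\ G \in L(Q),\, 0 \in G}} x^{\dim(F\vee G)+1} \\
&= \sum_{\substack{F,\, 0 \in F \\ G,\, 0 \in G}} x^{(\dim F + 1)+(\dim G + 1)-1} \\
&= \frac{1}{x}\, f_P^0 f_Q^0,
\end{align*}
the last step being the multiplication of the two defining sums of $f_P^0$ and $f_Q^0$.

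Next I would treat $f_{P \vee Q}^1$ using the second bijection, which identifies the faces $H$ with $0 \notin H$ with the projections $\varphi(F*G)$ for $0 \notin F$, $0 \notin G$. The key point is the dimension of $\varphi(F * G)$. Here I would first observe that a face $F$ of $P$ with $0 \notin F$ actually satisfies $0 \notin \aff(F)$: writing $F = \{x \in P : \alpha(x) = c\}$ for a supporting functional with $\alpha \le c$ on $P$, the fact that $0 \in P$ gives $c \ge \alpha(0) = 0$, while $0 \notin F$ forces $c \neq 0$, so the hyperplane $\{\alpha = c\} \supseteq \aff(F)$ misses $0$. The same holds for $G$ (and trivially if $F$ or $G$ is the empty face). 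Thus Lemma~\ref{lemma:projection_injective_on_R*S} applies and $\varphi(F*G) \cong F*G$, whence $\dim \varphi(F*G) = \dim(F*G) = \dim F + \dim G + 1$ by the affine independence of the spanning set exhibited in the proof of that lemma. Consequently
\begin{align*}
f_{P \vee Q}^1 &= \sum_{\substack{F \in L(P),\, 0 \notin F \\ G \in L(Q),\, 0 \notin G}} x^{\dim \varphi(F*G)+1} \\
&= \sum_{\substack{F,\, 0 \notin F \\ G,\, 0 \notin G}} x^{(\dim F + 1)+(\dim G + 1)} \\
&= f_P^1 f_Q^1,
\end{align*}
and adding the two pieces yields the claimed identity.

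The conceptual crux — and essentially the only place needing real care — is this asymmetry in the exponents: gluing at a common vertex via $\vee$ loses one dimension (Proposition~1), producing the correction factor $1/x$, whereas the join $*$ gains one dimension, so the exponents add cleanly and no correction appears. The main technical obstacle is therefore the dimension bookkeeping for the non-origin faces: I must upgrade $0 \notin F$ to $0 \notin \aff(F)$ so that Lemma~\ref{lemma:projection_injective_on_R*S} is applicable and $\varphi$ genuinely preserves dimension, and I should check that the empty face is correctly accounted for, sitting in the $0 \notin H$ stratum and contributing the constant term $1$, which matches the product of the empty-face terms of $f_P^1$ and $f_Q^1$.
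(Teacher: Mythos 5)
Your proposal is correct and follows essentially the same route as the paper: split $f_{P\vee Q}$ into the origin and non-origin strata, apply the two bijections of Proposition~\ref{prop:faces_of_PvQ}, and track the dimension shift ($\vee$ adds dimensions, $*$ adds dimensions plus one). Your extra step upgrading $0 \notin F$ to $0 \notin \aff(F)$ so that Lemma~\ref{lemma:projection_injective_on_R*S} applies is a detail the paper leaves implicit in its remark after that lemma, and your bookkeeping of the empty face is likewise a welcome but inessential refinement.
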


\begin{proof}
    We have $f_{P\vee Q} = f_{P \vee Q}^0 + f_{P \vee Q}^1$.
    Using the first part of Proposition \ref{prop:faces_of_PvQ} and the fact that
    \begin{equation*}
        \dim(F \vee G) = \dim(F) + \dim(G)        
    \end{equation*}
    we obtain
    \begin{align*}
        f_{P \vee Q}^0 &= \sum_{\substack{H \in L(P \vee Q) \\ 0 \in H}} x^{\dim(H) + 1} \\
        &= \sum_{\substack{F \in L(P); \  0 \in F \\ G \in L(Q); \ 0 \in G}} x^{\dim(F \vee Q) + 1} \\ 
        &= \frac{1}{x} \sum_{\substack{F \in L(P); \ 0 \in F \\ G \in L(Q); \ 0 \in G}} x^{\dim(F) + 1} x^{\dim(G) + 1} \\
        &= \frac{1}{x} f_P^0 f_Q^0.
    \end{align*}
    Using the second part of Proposition \ref{prop:faces_of_PvQ} and the remarks after Lemma \ref{lemma:projection_injective_on_R*S}, together with the fact that
    \begin{equation*}
        \dim(F*G) = \dim(F) + \dim(G) + 1,
    \end{equation*}
    we get
    \begin{align*}
        f_{P \vee Q}^1 &= \sum_{\substack{H \in L(P \vee Q) \\ 0 \not \in H} } x^{\dim(H) + 1} \\
        &= \sum_{\substack{F \in L(P); \  0 \not\in F \\ G \in L(Q); \ 0 \not\in G}} x^{\dim(\varphi(F*G)) + 1}  \\
        &= \sum_{\substack{F \in L(P); \  0 \not\in F \\ G \in L(Q); \ 0 \not\in G}} x^{\dim(F*G) + 1}\\
        &= \sum_{\substack{F \in L(P); \ 0 \not \in F \\ G \in L(Q); \ 0 \not\in G}} x^{\dim(F) + 1}x^{\dim(G) + 1} \\
        &= f_P^1f_Q^1. 
    \end{align*}
\end{proof}

Combining Proposition \ref{prop:f_poly_of_PvQ} with Propositions \ref{prop:C(A<B)=C(A)VC(B)} and \ref{prop:O(A<B)=O(A)VO(Bop)} we get the following Corollary.

\begin{corollary}\label{corollary:f_poly_of_O(A<B)_and_C(A<B)}
    For any posets $A$ and $B$ on disjoint sets,
    \begin{equation*}
        f_{\mathcal{O}(A < B)} = \frac{1}{x} f_{\mathcal{O}(A)}^0 f_{\mathcal{O}(B^{\op})}^0 +  f_{\mathcal{O}(A)}^1  f_{\mathcal{O}(B^{\op})}^1
    \end{equation*}
    and
    \begin{equation*}
        f_{\mathcal{C}(A<B)} = \frac{1}{x} f^0_{\mathcal{C}(A)} f_{\mathcal{C}(B)}^0 +  f_{\mathcal{C}(A)}^1  f_{\mathcal{C}(B)}^1.
    \end{equation*}
\end{corollary}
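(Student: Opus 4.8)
The plan is to read off both identities by combining the structural descriptions of $\mathcal{C}(A<B)$ and $\mathcal{O}(A<B)$ (Propositions \ref{prop:C(A<B)=C(A)VC(B)} and \ref{prop:O(A<B)=O(A)VO(Bop)}) with the $f$-polynomial formula for the subdirect sum (Proposition \ref{prop:f_poly_of_PvQ}). Before substituting, I would check that the hypotheses of Proposition \ref{prop:f_poly_of_PvQ} are satisfied, namely that the origin is a vertex of each of the four building-block polytopes $\mathcal{C}(A)$, $\mathcal{C}(B)$, $\mathcal{O}(A)$, $\mathcal{O}(B^{\op})$. For a chain polytope this holds because the empty set is an antichain, so $e_{\emptyset}=0$ is a vertex; for an order polytope it holds because the empty set is a filter, so again $e_{\emptyset}=0$ is a vertex. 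In particular the decompositions $f = f^0 + f^1$ used on the right-hand sides are all well-defined.

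For the chain polytope, Proposition \ref{prop:C(A<B)=C(A)VC(B)} gives the equality $\mathcal{C}(A<B) = \mathcal{C}(A) \vee \mathcal{C}(B)$ on the nose, so applying Proposition \ref{prop:f_poly_of_PvQ} with $P = \mathcal{C}(A)$ and $Q = \mathcal{C}(B)$ yields
\begin{equation*}
    f_{\mathcal{C}(A<B)} = f_{\mathcal{C}(A) \vee \mathcal{C}(B)} = \frac{1}{x} f_{\mathcal{C}(A)}^0 f_{\mathcal{C}(B)}^0 + f_{\mathcal{C}(A)}^1 f_{\mathcal{C}(B)}^1,
\end{equation*}
which is exactly the second formula.

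For the order polytope, Proposition \ref{prop:O(A<B)=O(A)VO(Bop)} supplies only an isomorphism $\mathcal{O}(A<B) \cong \mathcal{O}(A) \vee \mathcal{O}(B^{\op})$ rather than an equality, so here I would invoke the fact that the $f$-polynomial is an invariant of the face lattice and hence unchanged under any polytope isomorphism. This gives $f_{\mathcal{O}(A<B)} = f_{\mathcal{O}(A) \vee \mathcal{O}(B^{\op})}$, and a second application of Proposition \ref{prop:f_poly_of_PvQ}, now with $P = \mathcal{O}(A)$ and $Q = \mathcal{O}(B^{\op})$, produces the first formula.

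The only point that is not completely mechanical — and hence the main thing to get right — is that the superscript decomposition $f = f^0 + f^1$ is defined relative to a chosen origin vertex, whereas the isomorphism of Proposition \ref{prop:O(A<B)=O(A)VO(Bop)} explicitly does \emph{not} fix the origin. This causes no difficulty: the left-hand quantity $f_{\mathcal{O}(A<B)}$ is the \emph{full} $f$-polynomial, which makes no reference to the origin and is therefore a genuine isomorphism invariant, while the origin-dependent polynomials $f_{\mathcal{O}(A)}^0$, $f_{\mathcal{O}(A)}^1$, $f_{\mathcal{O}(B^{\op})}^0$, $f_{\mathcal{O}(B^{\op})}^1$ on the right are each computed inside $\mathcal{O}(A)$ or $\mathcal{O}(B^{\op})$ separately, where $0$ is an honest vertex. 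Thus they are well-defined and the substitution into Proposition \ref{prop:f_poly_of_PvQ} is legitimate, completing the argument.
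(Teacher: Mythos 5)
Your proposal is correct and matches the paper's (implicit) argument exactly: the corollary is stated as an immediate combination of Propositions \ref{prop:C(A<B)=C(A)VC(B)}, \ref{prop:O(A<B)=O(A)VO(Bop)} and \ref{prop:f_poly_of_PvQ}, which is precisely the substitution you carry out. Your extra care in verifying that the origin is a vertex of each building block, and in noting that the full $f$-polynomial is a face-lattice invariant so the non-origin-preserving isomorphism causes no trouble, correctly fills in the details the paper leaves unstated.
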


In both the order and the chain polytope the origin is always a vertex, but in the chain polytope the origin turns out to be a simple vertex.
With this observation we obtain the following inequality.

\begin{lemma}\label{lemma:f_C(P)_less_f_O(P)}
    For any poset $\mathcal{P}$,
    \begin{equation*}
        f_{\mathcal{C}(\mathcal{P})}^0 \le f_{\mathcal{O}(\mathcal{P})}^0.
    \end{equation*}
\end{lemma}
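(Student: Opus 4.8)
The plan is to pass to vertex figures. Since the origin is a vertex of both $\mathcal{O}(\mathcal{P})$ and $\mathcal{C}(\mathcal{P})$, the identity $f_P^0 = x f_{P/0}$ recorded in Section~\ref{section:preliminaries} reduces the claim to the coordinate-wise inequality $f_{\mathcal{C}(\mathcal{P})/0} \le f_{\mathcal{O}(\mathcal{P})/0}$ between the $f$-polynomials of the two vertex figures. Both $\mathcal{O}(\mathcal{P})$ and $\mathcal{C}(\mathcal{P})$ are full-dimensional in $\mathbb{R}^{\mathcal{P}}$, so both vertex figures are polytopes of dimension $\#\mathcal{P}-1$; write $d = \#\mathcal{P}$.

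Next I would identify $\mathcal{C}(\mathcal{P})/0$ explicitly. Among the facets of $\mathcal{C}(\mathcal{P})$ listed in Section~\ref{section:preliminaries}, the facets $\sum_{i\in C}x_i = 1$ coming from maximal chains do not contain the origin, since the left-hand side vanishes there. Hence the origin lies on exactly the $d$ coordinate facets $x_i = 0$, $i \in \mathcal{P}$, whose outer normals $-e_i$ are linearly independent. A vertex of a $d$-dimensional polytope lying on exactly $d$ facets is simple, so $\mathcal{C}(\mathcal{P})/0$ is a $(d-1)$-simplex. In particular $f_{\mathcal{C}(\mathcal{P})/0} = (1+x)^{d}$ and $f_{\mathcal{C}(\mathcal{P})}^0 = x(1+x)^{\#\mathcal{P}}$, which, as I note below, is the smallest possible $f$-polynomial of a $(d-1)$-polytope.

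It therefore remains to compare a simplex with the $(d-1)$-polytope $\mathcal{O}(\mathcal{P})/0$. Here I would invoke the classical fact that the simplex minimizes each face number among all polytopes of a fixed dimension: every $(d-1)$-polytope $R$ satisfies $f_i(R) \ge \binom{d}{i+1}$ for all $i$, equivalently $f_{\Delta^{d-1}} = (1+x)^{d} \le f_R$ (see, e.g., \cite{Ziegler}). Applying this with $R = \mathcal{O}(\mathcal{P})/0$ gives $f_{\mathcal{C}(\mathcal{P})/0} \le f_{\mathcal{O}(\mathcal{P})/0}$, and multiplying through by $x$ (which preserves inequalities in $\mathbb{N}[x]$) yields the claim.

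The only substantial input is the minimization statement in the last paragraph; the reduction to vertex figures and the identification of $\mathcal{C}(\mathcal{P})/0$ as a simplex are routine. I expect this minimization to be the main obstacle if one wants a self-contained argument, since the naive incidence counts—between $i$-faces and vertices, or between $i$-faces and facets—bound $f_i$ in the wrong direction for intermediate $i$ (a generic $i$-face can contain many vertices and lie on many facets). A clean self-contained proof instead proceeds by induction on the dimension through face figures; alternatively one simply cites it as a standard lower bound for $f$-vectors of polytopes.
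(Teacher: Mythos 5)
Your proposal is correct and follows essentially the same route as the paper: identify the origin as a simple vertex of $\mathcal{C}(\mathcal{P})$ so that $\mathcal{C}(\mathcal{P})/0$ is a $(\#\mathcal{P}-1)$-simplex, then invoke the classical fact that simplices minimize $f$-vectors in each dimension. The only (immaterial) difference is that you certify simplicity by counting the facets through the origin, while the paper counts the edges through it.
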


\begin{proof}
    The empty set and each singleton of $\mathcal{P}$ are antichains of $\mathcal{P}$.
    Therefore the origin $0 = e_\emptyset$ and each standard basis vector $e_i \in \mathbb{R}^{\mathcal{P}}$, where  $i \in \mathcal{P}$, are vertices of $\mathcal{C}(\mathcal{P})$.
    Because $\mathcal{C}(\mathcal{P})$ is contained in the hypercube, we see that the edges of $\mathcal{C}(\mathcal{P})$ containing the origin are exactly the line segments between $0$ and $e_i$, where $i \in \mathcal{P}$.
    Thus $0$ is contained in exactly $\#\mathcal{P}$ edges of $\mathcal{C}(\mathcal{P})$ and hence the vertex figure $\mathcal{C}(\mathcal{P}) / 0$ is a $(\#\mathcal{P} - 1)$-simplex.
    
    Since the empty set is a filter of $\mathcal{P}$, the origin is also a vertex in $\mathcal{O}(\mathcal{P})$.
    Thus the vertex figure $\mathcal{O}(\mathcal{P})/0$ is a $(\# \mathcal{P} - 1)$-dimensional polytope.
    Since simplicies have the smallest $f$-vectors for their dimension, it follows that
    \begin{equation*}
        f_{\mathcal{C}(\mathcal{P})}^0 = x f_{\mathcal{C}(\mathcal{P})/0} \le x f_{\mathcal{O}(\mathcal{P})/0} = f_{\mathcal{O}(\mathcal{P})}^0.
    \end{equation*}
\end{proof}

In this paper, we pay special attention to whether a face contains or does not contain the origin as a vertex.
We therefore want to compare the number of these kinds of faces to each other.
The origin plays no special role in the next result so we state it in a slightly more general setting.

\begin{proposition}\label{prop:estimation}
    Let $P$ be any polytope and let $v$ be a vertex of $P$.
    Then for all $k \ge 0$, 
    \begin{equation*}
        \# \{ F \in L(P) \mid v \in F, \ \dim(F)= k \} \le \# \{ G \in L(P) \mid v \not\in G, \ \dim(G) = k-1\}.
    \end{equation*}
\end{proposition}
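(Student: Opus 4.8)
The plan is to reduce the inequality to a statement about subdivisions of the vertex figure. First I would rewrite the left-hand side through the vertex figure: the $k$-dimensional faces of $P$ containing $v$ are in bijection with the $(k-1)$-dimensional faces of $P/v$ (this is exactly the bijection underlying the identity $f_P^0 = x f_{P/0}$ recorded before Lemma~\ref{lemma:f_C(P)_less_f_O(P)}, applied at a general vertex $v$). Hence the left-hand side equals $f_{k-1}(P/v)$, and the goal becomes to show that $P$ has at least as many $(k-1)$-faces avoiding $v$ as $P/v$ has $(k-1)$-faces. The case $k=0$ is trivial: the only $0$-face containing $v$ is $\{v\}$, and the only $(-1)$-face is the empty face, so both sides equal $1$. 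For $k \ge 1$ I would realize $P/v$ concretely as $P \cap H$, where $H$ is a hyperplane with $v$ strictly on one side and all other vertices of $P$ strictly on the other.

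The geometric heart of the plan is radial projection $\pi$ from $v$ onto $H$. Let $A$ be the union of all faces of $P$ that avoid $v$ (the antistar of $v$); since every vertex of such a face lies on the far side of $H$, all of $A$ lies on the far side, so $\pi$ is defined on $A$. I would show that $\pi|_A$ is a homeomorphism onto $P/v = P \cap H$: each ray from $v$ exits $\partial P$ in a single point $y \ne v$, whose minimal face cannot contain $v$ (otherwise the segment $[v,y]$ would lie in $\partial P$ rather than crossing the interior), so $y \in A$; this gives bijectivity, and a continuous bijection from the compact set $A$ to the Hausdorff space $P/v$ is a homeomorphism. Because $\pi$ is a projective map and is injective on each face $G \subseteq A$, it sends $G$ to a polytope of the same dimension. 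Thus the images $\{\pi(G) : G \in L(P),\ v \notin G,\ G \ne \emptyset\}$ form a polyhedral subdivision $\mathcal{S}$ of the polytope $P/v$, and for each $j$ the $j$-cells of $\mathcal{S}$ correspond bijectively to the $j$-faces of $P$ avoiding $v$.

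The final step is the purely combinatorial observation that any polyhedral subdivision $\mathcal{S}$ of a polytope $R$ has at least as many $j$-cells as $R$ has $j$-faces: for each $j$-face $\Delta$ of $R$ the subdivision $\mathcal{S}$ restricts to a subdivision of $\Delta$, which contains at least one $j$-cell whose relative interior lies in the relative interior of $\Delta$; since distinct faces of $R$ have disjoint relative interiors, these cells are distinct. Applying this with $R = P/v$ and $j = k-1$ gives $f_{k-1}(P/v) \le \#\{G \in L(P) \mid v \notin G,\ \dim G = k-1\}$, which is the claim. I expect the main obstacle to be the middle step: verifying rigorously that the projected antistar is a genuine polyhedral subdivision — that the images $\pi(G)$ meet along common faces and preserve dimension. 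Both points follow from the homeomorphism together with the fact that $\pi$ acts projectively, hence face-lattice-preservingly, on each face of $A$, so the care lies in assembling these local statements into the global subdivision property rather than in any single computation.
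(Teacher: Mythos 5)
Your argument is correct, but it takes a genuinely different route from the paper's. The paper proves the inequality by a short, purely lattice-theoretic injection: each face $F$ containing $v$ is sent to a maximal face of $F$ avoiding $v$ (necessarily of dimension $\dim(F)-1$), and injectivity follows because $F$ is recovered as the join of its image with the vertex $\{v\}$ in $L(P)$; no vertex figures, projections, or subdivisions appear. You instead identify the left-hand side with $f_{k-1}(P/v)$, realize the vertex figure as a hyperplane section $P \cap H$, project the antistar $A$ of $v$ radially onto it to get a polyhedral subdivision whose $j$-cells biject with the $j$-faces of $P$ avoiding $v$, and finish with the fact that a subdivision of a polytope has at least as many $j$-cells as the polytope has $j$-faces. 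All the load-bearing steps check out: $\pi|_A$ is injective because a point of $A$ lying strictly between $v$ and another point of $P$ would force $v$ into its carrier face, the restriction of the subdivision to a face of $P\cap H$ is again a subdivision by the standard supporting-hyperplane argument, and each full-dimensional cell of a subdivision of a face has its relative interior inside that face's relative interior. The one imprecise spot is your parenthetical reason why the exit point $y$ of a ray has a carrier face avoiding $v$: for rays lying entirely in $\partial P$ (for instance through a boundary point of $P \cap H$) the segment $[v,y]$ genuinely does lie in $\partial P$, so ``crossing the interior'' is not the right contradiction; the correct one is that if $v$ lay in the carrier face $F_y$, then $y-v$ would be a direction in $\aff(F_y)$ and $y \in \operatorname{relint}(F_y)$ would let the ray continue past $y$ inside $P$, contradicting that $y$ is the last point of $P$ on the ray. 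In exchange for being longer and needing more machinery (projective maps, polyhedral complexes, the subdivision counting lemma), your approach gives a transparent geometric explanation of the inequality --- the antistar of $v$ subdivides the vertex figure --- whereas the paper's proof buys brevity and stays entirely inside the face lattice.
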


\begin{proof}
    Identify each face with its vertex set.
    Define a function
    \begin{equation*}
        \varphi \colon \{ F \in L(P) \mid v \in F \} \longrightarrow \{ G \in L(P) \mid v \not\in G \}
    \end{equation*}
    by defining $\varphi(F)$ to be an arbitrary maximal subset of $F \setminus \{ v \}$ that is a face of $P$.
    Note that by maximality of $\varphi(F)$ we have $\dim(\varphi(F)) = \dim(F) - 1$.
    Once we have shown that $\varphi$ is injective, the result follows.
    We claim that
    \begin{equation*}
        \varphi(F) \vee \{ v \} = F 
    \end{equation*}
    for any $F \in L(P)$ with $v \in F$, where $\vee$ is the join in the lattice $L(P)$.
    Since $\varphi(F) \cup \{ v \} \subseteq F$ we have $\varphi(F) \vee \{ v \} \subseteq F$.
    Now since
    \begin{equation*}
        \varphi(F) \subsetneqq \varphi(F) \vee \{ v \} \subseteq F
    \end{equation*}
    we get
    \begin{equation*}
        \dim(F) - 1 = \dim(\varphi(F)) < \dim(\varphi(F) \vee \{ v \}) \le \dim(F).
    \end{equation*}
    Therefore $\dim(\varphi(F) \vee \{ v \}) = \dim(F)$ and thus $\varphi(F) \vee \{ v \} = F$.
    Hence $\varphi$ has a left inverse and therefore it is injective.
\end{proof}

We therefore obtain the following, which shall be useful later.

\begin{corollary}\label{corollary:f_P^0_<_xf_P^1}
    For any polytope $P$ containing the origin as a vertex,
    \begin{equation*}
        f_P^0 \le x f_P^1.
    \end{equation*}
\end{corollary}

\begin{proof}
    Multiplication by $x$ shifts the coefficients up one degree and both $f_P^0$ and $xf_P^1$ have constant term zero.
    The inequality is thus equivalent with Proposition~\ref{prop:estimation}.
\end{proof}

We also get the following result, which might be of independent interest.

\begin{proposition}\label{lemma:f_ineq_pyrvee_vs_join}
    If $P$ and $Q$ are any polytopes both having origin as a vertex, then
    \begin{equation*}
        f_{\pyr(P \vee Q)} \ge f_{P*Q}.
    \end{equation*}
\end{proposition}
\begin{proof}
    Since for any polytopes $R$ and $S$ it holds that $f_{R*S} = f_Rf_S$ (see e.g.\ Section 15.1.3 in \cite{handbook}) and since a single point has $f$-polynomial $1+x$, we get from Proposition~ \ref{prop:f_poly_of_PvQ} that 
    \begin{equation*}
        f_{\pyr(P \vee Q)} = \frac{1}{x} f_P^0 f_Q^0 + f_P^1f_Q^1 + f_P^0f_Q^0 + xf_P^1f_Q^1.
    \end{equation*}
    The desired inequality $f_{\pyr(P \vee Q)} \ge f_{P*Q}$ is thus equivalent with 
    \begin{equation*}
        \frac{1}{x} f_P^0 f_Q^0 + f_P^1f_Q^1 + f_P^0f_Q^0 + xf_P^1f_Q^1 \ge (f_P^1 + f_P^0)(f_Q^1 + f_Q^0).
    \end{equation*}
    After multiplying both sides with $x$, expanding the product on the right-hand side and subtracting common terms, this is seen to be equivalent with
    \begin{equation*}
        f_P^0f_Q^0 + x^2f_P^1f_Q^1 \ge xf_P^1f_Q^0 + xf_P^0f_Q^1.
    \end{equation*}
    Define $g_P \coloneqq xf_P^1- f_P^0$ and  $g_Q \coloneqq xf_Q^1 - f_Q^0$.
    By Corollary \ref{corollary:f_P^0_<_xf_P^1} we have $g_P  \ge 0$ and $g_Q \ge 0$.
    By substituting $xf_P^1 = f_P^0 + g_P$ and $xf_Q^1 = f_Q^0 + g_Q$ into the above equation, we get the equivalent inequality
    \begin{equation*}
        f_P^0f_Q^0 + (f_P^0+g_P)(f_Q^0+g_Q) \ge (f_P^0 + g_P)f_Q^0 + f_P^0(f_Q^0+g_Q).
    \end{equation*}
    By expanding products and subtracting common terms, this inequality is seen to be equivalent with $g_Pg_Q \ge 0$ which certainly holds as $g_P \ge 0$ and $g_Q \ge 0$.
    The proposition follows.
\end{proof}

In our main theorem, we will construct new posets by taking disjoint unions of earlier constructed posets.
It is easily verified that if $\mathcal{P}$ and $\mathcal{Q}$ are posets on disjoint sets, then $\mathcal{O}(\mathcal{P} \sqcup \mathcal{Q}) = \mathcal{O}(\mathcal{P}) \times \mathcal{O}(\mathcal{Q})$ and $\mathcal{C}(\mathcal{P} \sqcup \mathcal{Q}) = \mathcal{C}(\mathcal{P}) \times \mathcal{C}(\mathcal{Q})$.
Recall that if $P$ and $Q$ are any polytopes, then the map
\begin{align*}
    (L(P) \setminus \{ \emptyset \}) \times (L(Q) \setminus \{ \emptyset \}) & \longrightarrow L(P \times Q) \setminus \{ \emptyset \} \\
    (F,G) &\longmapsto F \times G
\end{align*}
is a bijection.
Hence the $f$-polynomial of $P \times Q$ is given by
\begin{align*}
    f_{P \times Q} &= 1 + \sum_{\substack{F \times G \in L(P \times Q) \\ \dim(F \times G) \ge 0}} x^{\dim(F \times G) + 1} \\
    &= 1 + \frac{1}{x} \sum_{\substack{F \in L(P); \ \dim(F) \ge 0 \\ G \in L(Q); \  \dim(G) \ge 0}} x^{\dim(F) + 1} x^{\dim(G) + 1} \\
    &= 1 + \frac{(f_P - 1)(f_Q - 1)}{x}
\end{align*}
where we used the fact that $\dim(F \times G) = \dim(F) + \dim(G)$ for all faces $F$ and $G$.
We therefore easily verify the following lemma.
\begin{lemma}\label{lemma_f_ineq_times}
    Let $P_1,P_2,Q_1,Q_2$ be any polytopes such that $f_{P_1} \le f_{P_2}$ and $f_{Q _1} \le f_{Q_2}$.
    Then $f_{P_1 \times Q_1} \le f_{P_2 \times Q_2}$.
\end{lemma}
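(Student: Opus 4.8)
The plan is to reduce everything to the closed formula
\[
f_{P \times Q} = 1 + \frac{(f_P - 1)(f_Q - 1)}{x}
\]
derived immediately before the statement, and then to manipulate polynomials in $\mathbb{N}[x]$ using only two facts: that coefficient-wise inequality is preserved under multiplication (noted in Section~\ref{section:preliminaries}), and that it is trivially preserved under adding or subtracting a fixed polynomial, since $\le$ is defined coefficient-wise. First I would record that $f_R - 1 \in \mathbb{N}[x]$ for every polytope $R$: the constant term of $f_R$ counts the empty face and hence equals exactly $1$, so $f_R - 1$ has non-negative coefficients and vanishing constant term. Subtracting $1$ from the hypotheses $f_{P_1} \le f_{P_2}$ and $f_{Q_1} \le f_{Q_2}$ then yields $f_{P_1} - 1 \le f_{P_2} - 1$ and $f_{Q_1} - 1 \le f_{Q_2} - 1$, with all four polynomials lying in $\mathbb{N}[x]$.

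Next I would apply the multiplication property of Section~\ref{section:preliminaries} to these two inequalities to obtain
\[
(f_{P_1} - 1)(f_{Q_1} - 1) \le (f_{P_2} - 1)(f_{Q_2} - 1).
\]
Both sides are divisible by $x$ (indeed by $x^2$, since each factor has lowest-degree term at least $x^1$, contributed by the vertices), so dividing through by $x$ is legitimate and amounts merely to shifting every coefficient down one degree, which preserves the coefficient-wise inequality. Finally, adding the constant $1$ to both sides preserves the inequality and, by the product formula above, produces exactly $f_{P_1 \times Q_1} \le f_{P_2 \times Q_2}$, as desired.

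The argument is entirely routine, and I would expect no serious obstacle; the only point demanding any care is the division by $x$, where one must verify that the numerators are genuinely divisible by $x$ so that the quotient again lies in $\mathbb{N}[x]$ and the coefficient bookkeeping remains valid. This is precisely guaranteed by the observation that $f_R - 1$ has vanishing constant term, whence the product of two such polynomials is divisible by $x^2$.
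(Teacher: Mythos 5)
Your proof is correct and follows essentially the same route as the paper, which simply asserts that the lemma is ``easily verified'' from the product formula $f_{P \times Q} = 1 + \frac{(f_P - 1)(f_Q - 1)}{x}$ derived immediately beforehand. You have merely spelled out the routine details (subtracting $1$, multiplying the inequalities in $\mathbb{N}[x]$, checking divisibility by $x$ before shifting down, and adding $1$ back), all of which are sound.
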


\section{Main theorem}\label{section:main_theorem}
To prove our main theorem we need one more lemma.
For any poset $\mathcal{P}$ let
\begin{equation*}
    \alpha_\mathcal{P} \coloneqq \frac{1}{x} f^0_{\mathcal{C}(\mathcal{P})} \quad 
    \beta_\mathcal{P} \coloneqq \frac{1}{x} f^0_{\mathcal{O}(\mathcal{P})} \quad 
    \gamma_\mathcal{P} \coloneqq f_{\mathcal{O}(\mathcal{P})}^1 \quad 
    \delta_\mathcal{P} \coloneqq  f_{\mathcal{C}(\mathcal{P})}^1.
\end{equation*}
Notice that since $\mathcal{C}(\mathcal{P}) = \mathcal{C}(\mathcal{P}^{\op})$, we have $\alpha_\mathcal{P} = \alpha_{\mathcal{P}^{\op}}$ and $\delta_\mathcal{P} = \delta_{\mathcal{P}^{\op}}$.

\begin{lemma}\label{lemma:alpha_beta_gamma_delta}
    For any poset $\mathcal{P}$, if $f_{\mathcal{O}(\mathcal{P})} \le f_{\mathcal{C}(\mathcal{P})}$ then 
    \begin{enumerate}
        \item $x(\beta_\mathcal{P} - \alpha_\mathcal{P}) \le \delta_\mathcal{P} - \gamma_\mathcal{P}$, and 
        \item $\alpha_\mathcal{P} \le \beta_\mathcal{P} \le \gamma_\mathcal{P} \le \delta_\mathcal{P}.$
    \end{enumerate}
\end{lemma}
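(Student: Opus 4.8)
The plan is to reduce both statements to the identity $f_P = f_P^0 + f_P^1$ together with the two inequalities already established earlier in this section. First I would record that, straight from the definitions, $f_{\mathcal{O}(\mathcal{P})} = x\beta_{\mathcal{P}} + \gamma_{\mathcal{P}}$ and $f_{\mathcal{C}(\mathcal{P})} = x\alpha_{\mathcal{P}} + \delta_{\mathcal{P}}$. These are genuine identities in $\mathbb{N}[x]$ because every face containing the origin has nonnegative dimension, so $f_{\mathcal{O}(\mathcal{P})}^0$ and $f_{\mathcal{C}(\mathcal{P})}^0$ have zero constant term and are divisible by $x$; this is what makes $\alpha_{\mathcal{P}}$ and $\beta_{\mathcal{P}}$ well-defined polynomials, and it makes cancellation and division by $x$ legitimate throughout.

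For part (1) I would simply rewrite the hypothesis. Substituting the two identities, the assumption $f_{\mathcal{O}(\mathcal{P})} \le f_{\mathcal{C}(\mathcal{P})}$ reads $x\beta_{\mathcal{P}} + \gamma_{\mathcal{P}} \le x\alpha_{\mathcal{P}} + \delta_{\mathcal{P}}$; moving $x\alpha_{\mathcal{P}}$ and $\gamma_{\mathcal{P}}$ to the opposite sides, which preserves the coefficient-wise comparison, gives exactly $x(\beta_{\mathcal{P}} - \alpha_{\mathcal{P}}) \le \delta_{\mathcal{P}} - \gamma_{\mathcal{P}}$. So part (1) is in fact equivalent to the hypothesis and requires no extra input.

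For part (2) I would prove the three inequalities in turn. The first, $\alpha_{\mathcal{P}} \le \beta_{\mathcal{P}}$, is precisely Lemma~\ref{lemma:f_C(P)_less_f_O(P)}, namely $f_{\mathcal{C}(\mathcal{P})}^0 \le f_{\mathcal{O}(\mathcal{P})}^0$, divided through by $x$. The second, $\beta_{\mathcal{P}} \le \gamma_{\mathcal{P}}$, is Corollary~\ref{corollary:f_P^0_<_xf_P^1} applied to $P = \mathcal{O}(\mathcal{P})$, whose origin is a vertex since the empty set is a filter; that corollary gives $f_{\mathcal{O}(\mathcal{P})}^0 \le x f_{\mathcal{O}(\mathcal{P})}^1$, which upon division by $x$ is $\beta_{\mathcal{P}} \le \gamma_{\mathcal{P}}$. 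Only the third inequality, $\gamma_{\mathcal{P}} \le \delta_{\mathcal{P}}$, uses the hypothesis: from the already-proved $\alpha_{\mathcal{P}} \le \beta_{\mathcal{P}}$ I get $x\alpha_{\mathcal{P}} + \gamma_{\mathcal{P}} \le x\beta_{\mathcal{P}} + \gamma_{\mathcal{P}}$, and chaining this with the hypothesis in the form $x\beta_{\mathcal{P}} + \gamma_{\mathcal{P}} \le x\alpha_{\mathcal{P}} + \delta_{\mathcal{P}}$ yields $x\alpha_{\mathcal{P}} + \gamma_{\mathcal{P}} \le x\alpha_{\mathcal{P}} + \delta_{\mathcal{P}}$; cancelling the common summand $x\alpha_{\mathcal{P}}$ gives $\gamma_{\mathcal{P}} \le \delta_{\mathcal{P}}$.

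There is no serious obstacle here: the whole argument is bookkeeping with the decomposition $f_P = f_P^0 + f_P^1$ and two prior results. The only points demanding care are that every comparison is a coefficient-wise inequality in $\mathbb{Z}[x]$ — so that adding a common polynomial to both sides, cancelling one, and chaining by transitivity are all valid — and that division by $x$ is permissible precisely because each $f^0$-polynomial is a multiple of $x$.
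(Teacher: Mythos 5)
Your proposal is correct and follows essentially the same route as the paper: part (1) is the hypothesis $f_{\mathcal{O}(\mathcal{P})} \le f_{\mathcal{C}(\mathcal{P})}$ rearranged via $f_P = f_P^0 + f_P^1$, the first two inequalities of part (2) are Lemma~\ref{lemma:f_C(P)_less_f_O(P)} and Corollary~\ref{corollary:f_P^0_<_xf_P^1} divided by $x$, and $\gamma_{\mathcal{P}} \le \delta_{\mathcal{P}}$ follows by combining $\alpha_{\mathcal{P}} \le \beta_{\mathcal{P}}$ with part (1), exactly as in the paper. Your explicit remarks on divisibility by $x$ and on the validity of cancellation for coefficient-wise inequalities are careful additions but not a different argument.
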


\begin{proof}
    Since $f_{\mathcal{O}(\mathcal{P})} \le f_{\mathcal{C}(\mathcal{P})}$ we get $        f_{\mathcal{O}(\mathcal{P})}^0 +  f_{\mathcal{O}(\mathcal{P})}^1 \le f_{\mathcal{C}(\mathcal{P})}^0 +  f_{\mathcal{C}(\mathcal{P})}^1$
    and thus
    \begin{equation*}
        f_{\mathcal{O}(\mathcal{P})}^0 - f_{\mathcal{C}(\mathcal{P})}^0 \le  f_{\mathcal{C}(\mathcal{P})}^1 - f_{\mathcal{O}(\mathcal{P})}^1.
    \end{equation*}
    Looking at the definitions we obtain the desired inequality in part one.
    
    The first inequality in the second part follows from Lemma \ref{lemma:f_C(P)_less_f_O(P)}.
    The second inequality follows from Corollary \ref{corollary:f_P^0_<_xf_P^1}.
    For the last inequality we notice that since $\alpha_\mathcal{P} \le \beta_\mathcal{P}$ we get by the first part that $0 \le x(\beta_\mathcal{P} - \alpha_\mathcal{P}) \le \delta_\mathcal{P} - \gamma_\mathcal{P}$.
\end{proof}

We are now ready to state and prove our main theorem.
If a poset $\mathcal{P}$ does not have the $X$-poset from Figure \ref{fig:Xposet} as a subposet, we say $\mathcal{P}$ is $X$-\emph{free}.

\begin{theorem}\label{theorem:main_theorem}
    Let $\mathcal{F}$ be the family of posets built inductively by starting with all $X$-free posets and using the constructions $(\mathcal{P},\mathcal{Q}) \leadsto \mathcal{P} < \mathcal{Q}$ and $(\mathcal{P},\mathcal{Q}) \leadsto \mathcal{P} \sqcup \mathcal{Q}$.
    Then for any poset $\mathcal{P}$ in this family $\mathcal{F}$, it holds that
    \begin{equation*}
        f_{\mathcal{O}(\mathcal{P})} \le f_{\mathcal{C}(\mathcal{P})}.
    \end{equation*}
\end{theorem}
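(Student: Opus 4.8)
The plan is to argue by strong induction on the number of elements $\#\mathcal{P}$, exploiting the recursive description of $\mathcal{F}$. Before starting the induction I would record a structural fact that the statement implicitly needs: the family $\mathcal{F}$ is closed under taking opposites. This rests on the observation that the $X$-poset is self-dual, so $\mathcal{P}$ is $X$-free if and only if $\mathcal{P}^{\op}$ is $X$-free; combined with the identities $(\mathcal{P} < \mathcal{Q})^{\op} = \mathcal{Q}^{\op} < \mathcal{P}^{\op}$ and $(\mathcal{P} \sqcup \mathcal{Q})^{\op} = \mathcal{P}^{\op} \sqcup \mathcal{Q}^{\op}$, a short induction on the construction of $\mathcal{F}$ gives $\mathcal{P} \in \mathcal{F} \Rightarrow \mathcal{P}^{\op} \in \mathcal{F}$. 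This closure is exactly what lets the inductive hypothesis reach the poset $B^{\op}$ that appears in the order-polytope formula.

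For the base case, an $X$-free poset $\mathcal{P}$ has $\mathcal{O}(\mathcal{P})$ and $\mathcal{C}(\mathcal{P})$ unimodularly equivalent \cite{Hibi&Li}, so $f_{\mathcal{O}(\mathcal{P})} = f_{\mathcal{C}(\mathcal{P})}$ and the inequality holds with equality. For the inductive step, by definition of $\mathcal{F}$ either $\mathcal{P}$ is $X$-free, or $\mathcal{P} = A \sqcup B$, or $\mathcal{P} = A < B$ with $A, B \in \mathcal{F}$ nonempty (hence strictly smaller). If $\mathcal{P} = A \sqcup B$, then $\mathcal{O}(\mathcal{P}) = \mathcal{O}(A) \times \mathcal{O}(B)$ and $\mathcal{C}(\mathcal{P}) = \mathcal{C}(A) \times \mathcal{C}(B)$; the inductive hypothesis gives $f_{\mathcal{O}(A)} \le f_{\mathcal{C}(A)}$ and $f_{\mathcal{O}(B)} \le f_{\mathcal{C}(B)}$, and Lemma~\ref{lemma_f_ineq_times} yields $f_{\mathcal{O}(\mathcal{P})} \le f_{\mathcal{C}(\mathcal{P})}$. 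This disjoint-union case is routine.

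The substantial case is $\mathcal{P} = A < B$. Here I would feed Corollary~\ref{corollary:f_poly_of_O(A<B)_and_C(A<B)} into the $\alpha,\beta,\gamma,\delta$ notation of Lemma~\ref{lemma:alpha_beta_gamma_delta}. Writing $\mathcal{R} = B^{\op}$ and using $\alpha_B = \alpha_{B^{\op}}$ and $\delta_B = \delta_{B^{\op}}$, the two formulas become $f_{\mathcal{O}(A<B)} = x\beta_A\beta_{\mathcal{R}} + \gamma_A\gamma_{\mathcal{R}}$ and $f_{\mathcal{C}(A<B)} = x\alpha_A\alpha_{\mathcal{R}} + \delta_A\delta_{\mathcal{R}}$. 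Since $A$ and $\mathcal{R} = B^{\op}$ are both in $\mathcal{F}$ and strictly smaller than $A < B$, the inductive hypothesis supplies $f_{\mathcal{O}(A)} \le f_{\mathcal{C}(A)}$ and $f_{\mathcal{O}(\mathcal{R})} \le f_{\mathcal{C}(\mathcal{R})}$, so both conclusions of Lemma~\ref{lemma:alpha_beta_gamma_delta} hold for $A$ and for $\mathcal{R}$: namely $\alpha \le \beta \le \gamma \le \delta$ and $x(\beta - \alpha) \le \delta - \gamma$ in each case. The goal then reduces to the purely polynomial inequality $x\beta_A\beta_{\mathcal{R}} + \gamma_A\gamma_{\mathcal{R}} \le x\alpha_A\alpha_{\mathcal{R}} + \delta_A\delta_{\mathcal{R}}$ in $\mathbb{N}[x]$.

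To prove it I would chase the estimate
\begin{align*}
    x(\beta_A\beta_{\mathcal{R}} - \alpha_A\alpha_{\mathcal{R}})
    &= \beta_A\big(x(\beta_{\mathcal{R}} - \alpha_{\mathcal{R}})\big) + \alpha_{\mathcal{R}}\big(x(\beta_A - \alpha_A)\big) \\
    &\le \beta_A(\delta_{\mathcal{R}} - \gamma_{\mathcal{R}}) + \alpha_{\mathcal{R}}(\delta_A - \gamma_A) \\
    &\le \gamma_A(\delta_{\mathcal{R}} - \gamma_{\mathcal{R}}) + \gamma_{\mathcal{R}}(\delta_A - \gamma_A) \\
    &\le \delta_A\delta_{\mathcal{R}} - \gamma_A\gamma_{\mathcal{R}},
\end{align*}
where the first line is an exact algebraic identity (the cross terms cancel), the second uses $x(\beta - \alpha) \le \delta - \gamma$ for both $A$ and $\mathcal{R}$ together with $\beta_A, \alpha_{\mathcal{R}} \ge 0$, the third uses $\beta_A \le \gamma_A$ and $\alpha_{\mathcal{R}} \le \gamma_{\mathcal{R}}$, and the last follows from the expansion $\delta_A\delta_{\mathcal{R}} - \gamma_A\gamma_{\mathcal{R}} = \gamma_A(\delta_{\mathcal{R}} - \gamma_{\mathcal{R}}) + \gamma_{\mathcal{R}}(\delta_A - \gamma_A) + (\delta_A - \gamma_A)(\delta_{\mathcal{R}} - \gamma_{\mathcal{R}})$ by discarding the nonnegative last summand; all inequalities are legitimate in $\mathbb{N}[x]$ because the coefficientwise order is preserved under multiplication by nonnegative polynomials. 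Rearranging gives $f_{\mathcal{O}(A<B)} \le f_{\mathcal{C}(A<B)}$. The main obstacle is precisely this telescoping estimate: the chain $\alpha \le \beta \le \gamma \le \delta$ alone is too weak, and it is the sharper relation $x(\beta - \alpha) \le \delta - \gamma$ from Lemma~\ref{lemma:alpha_beta_gamma_delta}(1) — itself a repackaging of the full $f$-vector inequality at the previous stage — that makes the bound close. Everything else is bookkeeping engineered to isolate this one inequality.
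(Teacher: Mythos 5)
Your proof is correct and follows essentially the same route as the paper: strong induction with the Hibi--Li equality as base case, Lemma~\ref{lemma_f_ineq_times} for disjoint unions, and for ordinal sums a telescoping bound on $x(\beta_A\beta_{B^{\op}}-\alpha_A\alpha_{B^{\op}})$ driven by the two parts of Lemma~\ref{lemma:alpha_beta_gamma_delta}. The only cosmetic differences are that the paper avoids your op-closure observation by noting directly that $f_{\mathcal{O}(B^{\op})}=f_{\mathcal{O}(B)}$ and $f_{\mathcal{C}(B^{\op})}=f_{\mathcal{C}(B)}$, and it splits the cross terms so that the right-hand side matches exactly rather than discarding the extra nonnegative summand $(\delta_A-\gamma_A)(\delta_{B^{\op}}-\gamma_{B^{\op}})$.
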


\begin{proof}
    If a poset $\mathcal{P}$ does not have the $X$-poset as a subposet, then by the result of Hibi and Li (\cite{Hibi&Li}, Corollary 2.3) we have equality $f_{\mathcal{O}(\mathcal{P})} = f_{\mathcal{C}(\mathcal{P})}$.
    
    Suppose $A$ and $B$ are two posets (on disjoint sets) such that $f_{\mathcal{O}(A)} \le f_{\mathcal{C}(A)}$ and $~{f_{\mathcal{O}(B)} \le f_{\mathcal{C}(B)}}$.
    From Lemma \ref{lemma_f_ineq_times} we get
    \begin{equation*}
        f_{\mathcal{O}(A \sqcup B)} = f_{\mathcal{O}(A) \times \mathcal{O}(B)} \le f_{\mathcal{C}(A) \times \mathcal{C}(B)} = f_{\mathcal{C}(A \sqcup B)}.
    \end{equation*}
    It remains to show $f_{\mathcal{O}(A < B)} \le f_{\mathcal{C}(A < B)}$.
    Note that we can apply Lemma \ref{lemma:alpha_beta_gamma_delta} to $A$ and $B$ but also to $B^{\op}$ since
    \begin{equation*}
        f_{\mathcal{O}(B^{\op})} = f_{\mathcal{O}(B)} \le f_{\mathcal{C}(B)} = f_{\mathcal{C}(B^{\op})}
    \end{equation*}
    as $\mathcal{O}(B) \cong \mathcal{O}(B^{\op})$ and $\mathcal{C}(B) = \mathcal{C}(B^{\op})$.
    By Corollary \ref{corollary:f_poly_of_O(A<B)_and_C(A<B)} we can write
    \begin{align*}
        f_{\mathcal{O}(A < B)} &= \frac{1}{x} f_{\mathcal{O}(A)}^0 f_{\mathcal{O}(B^{\op})}^0 +  f_{\mathcal{O}(A)}^1  f_{\mathcal{O}(B^{\op})}^1 \\
        &= x\beta_A \beta_{B^{\op}} + \gamma_A \gamma_{B^{\op}}
    \end{align*}
    and 
    \begin{align*}
        f_{\mathcal{C}(A<B)} &= \frac{1}{x} f^0_{\mathcal{C}(A)} f_{\mathcal{C}(B)}^0 +  f_{\mathcal{C}(A)}^1  f_{\mathcal{C}(B)}^1 \\
        &= x\alpha_A \alpha_B + \delta_A \delta_B.
    \end{align*}
    Showing $f_{\mathcal{O}(A<B)} \le f_{\mathcal{C}(A < B)}$ is therefore equivalent to showing
    \begin{equation*}
        x\beta_A \beta_{B^{\op}} - x \alpha_A \alpha_B \le \delta_A\delta_B - \gamma_A \gamma_{B^{\op}}.
    \end{equation*}
    By rewriting both sides here, we see that our aim is thus to show
    \begin{equation*}
        x\alpha_A(\beta_{B^{\op}} - \alpha_B) + x(\beta_A - \alpha_A)\beta_{B^{\op}} \le \gamma_A(\delta_B - \gamma_{B^{\op}}) + (\delta_A - \gamma_A)\delta_B. \tag{1}
    \end{equation*}
    If we apply part 1 of Lemma \ref{lemma:alpha_beta_gamma_delta} to the poset $B^{\op}$ and part 2 to the poset $A$, we get
    \begin{equation*}
        \begin{cases}
            0 \le x(\beta_{B^{\op}} - \alpha_{B^{\op}}) \le \delta_{B^{\op}} - \gamma_{B^{\op}} \\
            0 \le \alpha_A \le \gamma_A.
        \end{cases}
    \end{equation*}
    Multiplying these together yields
    \begin{equation*}
        x\alpha_A(\beta_{B^{\op}} - \alpha_B)= x\alpha_A(\beta_{B^{\op}} - \alpha_{B^{\op}}) \le \gamma_A(\delta_{B^{\op}} - \gamma_{B^{\op}}) = \gamma_A(\delta_B - \gamma_{B^{\op}}). \tag{2}
    \end{equation*}
    Applying part 1 of Lemma \ref{lemma:alpha_beta_gamma_delta} to the poset $A$ and part 2 to the poset $B^{\op}$, we obtain
    \begin{equation*}
        \begin{cases}
            0 \le x(\beta_A - \alpha_A) \le \delta_A - \gamma_A \\
            0 \le \beta_{B^{\op}} \le \delta_{B^{\op}} = \delta_B.
        \end{cases}
    \end{equation*}
    Multiplying these inequalities together we get
    \begin{equation*}
        x(\beta_A - \alpha_A)\beta_{B^{\op}} \le (\delta_A - \gamma_A)\delta_B. \tag{3}
    \end{equation*}
    Adding (2) and (3) together gives us the inequality (1).
    We therefore have
    \begin{equation*}
        f_{\mathcal{O}(A < B)} \le f_{\mathcal{C}(A < B)}
    \end{equation*}
    finishing the proof.
    
\end{proof}

\begin{figure}[t]
    \centering
    \includegraphics[scale=1]{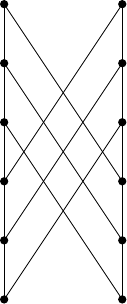}
    \caption{An $X$-free poset which is not constructed from posets of height at most 2 and taking disjoint unions and ordinal sums.}
    \label{fig:zigzag}
\end{figure}

The main result in \cite{Ahmad-Fourier-Joswig} is obtained from our Theorem \ref{theorem:main_theorem} by starting with antichains and using ordinal sums only.
For Theorem \ref{theorem:main_theorem}, a more natural inductive family of posets may be starting from posets of height at most 2 and taking ordinal sums and disjoint unions.
The family of posets thus constructed is clearly a subset of the posets constructed in Theorem \ref{theorem:main_theorem}, as posets of height at most~2 cannot have the $X$-poset as a subposet.
However, these two classes of posets are not equal since, for example, the poset in Figure \ref{fig:zigzag} (and its obvious generalizations) is $X$-free, but cannot be constructed from height 2 posets with disjoint unions and ordinal sums.

\printbibliography
    
\end{document}